\theoremstyle{plain}
\newtheorem{theorem}{Theorem}[section]
\newtheorem{lemma}[theorem]{Lemma}
\newtheorem{prop}[theorem]{Proposition}
\theoremstyle{definition}
\newtheorem{definition}[theorem]{Definition}
\newtheorem{rem}[theorem]{Remark}
\newtheorem{ex}[theorem]{Example}
\newcommand\C{{\mathbb C}}
\newcommand\R{{\mathbb R}}
\begin{document}
\title{On the type of generalized hypercomplex structures}

\author{Anna Fino}
\address[Anna Fino]{Dipartimento di Matematica ``G. Peano'', Universit\`{a} degli studi di Torino \\
Via Carlo Alberto 10\\
10123 Torino, Italy\\
\& Department of Mathematics and Statistics, Florida International University\\
Miami, FL 33199, United States}
\email{annamaria.fino@unito.it, afino@fiu.edu}

\author{Gueo Grantcharov}
\address[Gueo Grantcharov]{Department of Mathematics and Statistics \\
Florida International University\\
Miami, FL 33199, United States}
\email{grantchg@fiu.edu}

\begin{abstract}  The generalized hypercomplex structures defined within the framework of generalized geometry include hypercomplex and holomorphic symplectic structures as  particular cases. They have a $S^2$-family of generalized complex structures, and in this paper we study  the types of these structures and the corresponding twistor space. We show that there are generalized hypercomplex structures on the $4n$-dimensional tori, which do not contain a structure of maximal (complex) type.
Moreover, we show that the Kodaira-Thurston surface which has a holomorphic symplectic structure, admits also a generalized hypercomplex structure in which all generalized complex structures are of type $1$.

\end{abstract}

\maketitle

\section{Introduction}

  The notion of generalized complex geometry was introduced by N. Hitchin  in the context of generalized Calabi-Yau manifolds with fluxes \cite{Hitchin}. Later, it was developed  by  M. Gualtieri  \cite{Gualtieri}. The main idea is to replace the tangent bundle $TM$ of a  smooth manifold  $M$  with the  generalized tangent bundle, i.e. the  direct sum of the tangent and the cotangent bundle $TM  \oplus  T^*M$,  and the Lie bracket  with the Courant bracket
$$
[X+ \xi,Y + \eta]=[X,Y]+L_X \eta -L_Y \xi  - \frac 12 d(i_X  \eta - i_Y \xi), \quad X,Y \in \Gamma (TM), \xi, \eta \in \Gamma (T^* M).
$$
The Courant bracket admits more symmetries than just diffeomorphisms. The extra symmetries are called $B$-field transformations and are generated by closed $2$-forms on $M$.

Such symmetries lead to a ganeralization of the Courant bracket depending on a closed 3-form $H$ called twisted Courant bracket defined by
$$
[X+ \xi,Y + \eta]_H=[X,Y]+L_X \eta -L_Y \xi  - \frac 12 d(i_X  \eta - i_Y \xi) + i_Xi_Y H , \quad X,Y \in \Gamma (TM), \xi, \eta \in \Gamma (T^* M).
$$
and $B$-field transformations change $H$ to $H+dB$.

A generalized almost complex structures $\mathcal{I}$ is an endomorphism of $TM\oplus T^*M$ with $\mathcal{I}^2 = -{\mbox {Id}}$, compatible with the natural split signature inner product arising from the pairing of vectors and co-vectors. $\mathcal{I}$ is called generalized complex when it is integrable with respect to the (twisted) Courant bracket. Additionally, a choice of a positive definite metric on $TM$ leads to a symmetric isomorphism $\mathcal{G}$ of $TM \oplus T^*M$. When it commutes with $\mathcal{I}$ there is a second generalized almost complex structure $\tilde{\mathcal{I}} = \mathcal{GI}$. If $\tilde{\mathcal{I}}$ is also integrable,
the pair $\mathcal{I}, \tilde{\mathcal{I}}$ is called a (twisted) generalized K\"ahler. The genereralized K\"ahler structures were introduced in \cite{Gualtieri} and are equivalent to  bihermitian structures introduced first by physicists as (2,2)-supersymmetric sigma models \cite{GHR}. Since their appearance, the generalized complex and generalized K\"ahler structures are in the mainstream of research in differential geometry and geometric structures. For some of the recent advances see e.g. \cite{AGJ,  A-C-K, AS, ASU, BCD, B-G, B-G-2, C-N, C-K-W, C-K-W-2, C-M, Mario-Jeff, Goto, Goto2, G-V-GV,Hull-Zabzine, Kob, Mun, Sil, Va, V-SM, Wang, Witten}. An important question in generalized K\"ahler geometry is the construction of compact examples. Apart from the deformation of K\"ahler structures, there are many explicit examples of non-trivial (twisted) generalized K\"ahler structures, e.g.,  \cite{AD,AGG,AG, BF,BFG, BM, CG, DM, FP, FP2, FT}.  In particular, non-K\"ahler examples are given by compact solvmanifolds, in contrast with the case of compact nilmanifolds which cannot admit any invariant generalized K\"ahler structure unless they are tori  \cite{Cavalcanti}.

The generalized  complex structures were also generalized to various other geometric structures on $TM\oplus T^*M$, one of which is generalized hypercomplex. A generalized almost hypercomplex structure  is defined by  a triple  $(\mathcal I, \mathcal J, \mathcal K)$
of anti-commuting generalized almost complex structures such that  $\mathcal K = \mathcal I \mathcal J$. In this case the $S^2$-family $\mathcal{I}_{a,b,c} = a\mathcal{I} + b\mathcal{J} + c\mathcal{K},$  $a^2+b^2+c^2=1$,  is a family of generalized almost complex structures. When the family consists of generalized complex structures, we obtain a generalized hypercomplex structure. By \cite{St2}, it is enough to have two anti-commuting integrable generalized almost complex structure in order to enure that every structure in the family is integrable. In this case the space $Z=M\times S^2$ carries a tautologically defined generalized complex structure and is called the twistor space of $M$ (see Section 4).

The first appearance of generalized hypercomplex structures was in fact as part of a generalized hyperk\"ahler structure in \cite{Brendhaber}. Such structures arise from generalized hypercomplex structures  in the same way as the generalized K\"ahler arise from generalized complex - by adding a compatible positive definite metric on $TM$ which leads to a second generalized almost hypercomplex structure, and when the structures are integrable we have generalized hyperk\"ahler structure. In physics the generalized hyperk\"ahler manifolds correspond to (4,4)-supersymmetric sigma models. For the most recent physics application see \cite{Pap-W, Pap}

Since the only known non-K\"ahler compact examples of generalized hyperk\"ahler manifolds are locally products of compact Lie groups and hyperk\"ahler manifolds, it is natural to study a less restrictive conditions. A prospective candidate is the generalized hypercomplex structure as studied first by M. Stienon \cite{St1}, since both hypercomplex and holomorphic symplectic structures are particular cases of it.

The purpose of this note is to initiate a study of the types of the generalized complex structures in the corresponding hypercomplex $S^2$-family and the generalized complex structure on its twistor space. After the preliminaries we start in Section 3 with two generalized complex structures on a vector space and determine when they are part of a generalized hypercomplex family. Then we discuss the possible types of the structures in the hypercomplex family when the two are chosen among complex and symplectic structures up to a $B$-transform. We extend the condition under which two generalized complex structures define a hypercomplex family from a vector space to a manifold, thus generalizing the result of \cite{St2}. In the proof we use the twistor space and the ideas of D. Kaledin \cite{Kaledin}. This is the first part of Section 4, in the second  part we apply it to characterise the types of local generalized hypercomplex structures when the generalized complex structure on the twistor space is locally either of complex type or of the minimal possible type - one.

The main results of the paper are two examples in Section 5. One is of generalized hypercomplex structure on $4n$-dimensional tori in which the generic structure in the family is of symplectic type and there is no structure of complex type. The second one is a generalized hypercomplex structure on Kodaira-Thurston surface where  every structure in the family is of type $1$. Note that Kodaira-Thurston surface is one of the first examples of non-K\"ahler holomorphic symplectic manifold, so it admits generalized hypercomplex structures of different type.

\section{ Preliminaries on linear generalized complex structures}

 A generalized complex structure on a vector space $V$ is a linear complex structure $\mathcal I$ on $V \oplus V^*$ orthogonal with respect to the pairing
\begin{equation}\label{pairing}
\langle X + \xi, Y + \eta \rangle = \frac 12  (\xi(Y) +\eta (X), \quad X, Y \in V, \, \xi, \eta \in V^*.
\end{equation}
Since $\mathcal I^2 = - {\mbox{Id}}$, the complexification of $V \oplus V^*$ splits as a direct sum of $\pm i$-eingenspaces $L$ and $\overline L$. Moreover, as $\mathcal I$ is orthogonal, we obtain that for $v, w \in L$
$$
\langle v, w \rangle = \langle \mathcal I  v,  \mathcal I w \rangle =  \langle i v, i  w \rangle = - \langle v,  w \rangle.
$$
Therefore  $L$ is a maximal isotropic subspace with respect to the pairing. Conversely, prescribing  such  an $L$ as the  $i$-eigenspace determies a unique generalized complex structure on $V$.   As a consequence  a generalized complex structure on a vector space $V$ is equivalent to a  maximal isotropic subspace  $L$ such that $L  \subset (V \oplus V^*) \otimes \C$ such that $L \cap \overline L = \{ 0 \}$.

\begin{ex}

Examples of generalized complex structure are given by complex and  symplectic structures.
A complex structure $J$ on a real vector space $V$  induces a generalized complex structure on $V$ which can written in matrix form using the splitting  $V\oplus V^*$ as
$$\mathcal I_J = \left(  \begin{array}{cc} -J &0 \\0 & J^*\end{array} \right).
$$
The $i$-eigenspace of $\mathcal I_J$ is $L = V^{0,1} + V^{*{1,0}}$.

A symplectic form $\omega$ on $V$ also induces a generalized complex structure on $V$ by letting
$$ {\mathcal I}_{\omega} = \left(  \begin{array}{cc} 0& -\omega^{-1} \\ \omega & 0\end{array} \right).$$ In this case  the $i$-eigenspace of $\mathcal I_{\omega}$ is  given by
$$
L = \{ X - i  \, \omega (X) \, \mid X \in V \}.
$$
\end{ex}

\smallskip

A real $2$-form $B$ acts naturally on $V \oplus V^*$ by the B-field fransform $$e \mapsto e - B (\pi (e),$$ where $\pi$ is the projection on $V$. If $\mathcal I$ is a generalized complex structure on $V$ whose $i$-eingenspace is $L$,  the image  of $L$ under the action of a $B$-field is given by
$$
L_B = \{ e - i \,  B(e) \, \mid e \in E \}
$$
and it is still isotropic. The associated generalized complex structure $\mathcal I_B$ is called the $B$-field transform of $I$ and is given in matrix form by
$$
\mathcal I_B =  Ad (exp(B)) \mathcal I =  \left(  \begin{array}{cc} 1 &0 \\ -B & 1 \end{array}  \right) \,  \mathcal I  \,  \left(  \begin{array}{cc} 1 &0 \\ B & 1 \end{array}  \right) .
$$
Any two isotropic splittings of $V \oplus V^*$ are related by $B$-field transforms.
More in general, every element $\beta \in \Lambda^2 V$ also acts in a similar way
$$
X + \xi \mapsto X + \xi + \iota_\xi \beta
$$
and  the $\beta$-transform of a generalized complex structure is still a generalized complex structure.

A further characterization of a generalized complex structure on $V$ can be given viewing forms as spinors. The Clifford algebra of $V \oplus V^*$ is defined using the natural  form $\langle \cdot , \cdot \rangle$, i.e. for $v \in V \oplus V^* \subset {\mbox{Cl}}( V \oplus V^*)$ we have $v^2 = \langle v,  v\rangle$. There is a natural action of the Clifford algebra on $\Lambda^{\bullet} V^*$ by
$$
(X + \xi) \cdot\alpha = \iota_X \alpha + \xi \wedge \alpha.
$$
On $\Lambda^{\bullet} V \oplus V^* \subset {\mbox{Cl}}(V \oplus V^*)$ one can define the following bilinear form
$$
(\xi_1, \xi_2) \mapsto ( \sigma(\xi_1) \wedge \xi_2)_{\mbox{top}},
$$
where $\sigma$ is the anti-automorphism of ${\mbox{Cl}}(V \oplus V^*)$ given on decomposable elements by
$$
\sigma (v_1 \cdot v_2  \cdot \cdot \cdot  \cdot \cdot v_k) = v_k  \cdot \cdot \cdot  v_2 \cdot  \cdot \cdot v_1
$$
and ${\mbox{top}}$ denotes the top degree of the form.

Given a form $\rho \in \Lambda^{\bullet} V^* \otimes \C$ of possibly mixed degree one can consider the annihilator
$$
L_{\rho} =  \{ v \in (V \oplus V^*) \otimes \C  \mid v \cdot \rho =0 \}.
$$
Since $ \overline{L_{\rho} } = L_{\overline \rho}$ and
$
0 =v^2 \cdot \rho = \langle v, v  \rangle  \rho, $ for every $v \in L_{\rho}$, the subspace $L_{\rho}$ is isotropic.  If $L_{\rho}$ is maximal, i.e. $\dim_{\C} L_{\rho} = \dim_{\R} V$,  the  element $\rho \in \Lambda^{\bullet} V^*$ is called a pure form.

Given a maximal isotropic subspace $L \subset V \oplus V^*$ one can always find a pure form annihilating it and conversely if two forms annihilate the same isotropic subspace, then they are multiple of each other. Therefore there is a one to one correspondence between maximal isotropic subspaces and lines of pure forms. Algebraically, if $\rho$ is a pure form, then it must be of the form $e^{B + i \omega} \Omega$, where $B$ and $\omega$ are real $2$-forms and $\Omega$ is a decomposable complex form.

 A pure form $e^{B + i \omega} \Omega$ determines a generalized complex structure if and only if  $(\rho, \overline \rho) \neq 0$. This holds only if $V$ is even dimensional $(\dim V = 2n)$, in which case $(\rho, \overline \rho) = \Omega \wedge \overline \Omega \wedge  \omega^{n-k},$ where $k$ is the degree of $\Omega$. Therefore there exist no generalized complex structure on a odd dimensional space  and a generalized complex structure is equivalent to a line $K \subset \Lambda^{\bullet} V^* \otimes \C$ generated by a form $e^{B + i \omega} \Omega$, such that $\Omega$ is  a decomposable complex form of degree $k$, $b$ and $\omega$ are real $2$-forms  and $\Omega \wedge \overline \Omega \wedge \omega^{n -k} \neq 0$. The degree $k$ of the form $\Omega$ is called the type of the generalized complex structure and the line $K$ annihilating $L$ is said the canonical line.

 \begin{rem}  Note that the canonical line  in $\Lambda^{\bullet} V^* \otimes \C$ that gives the generalized complex structure for a complex structure is $\Lambda^{n,0} V^*$ and the line  for a symplectic form $\omega$ is generated by $e^{i \omega}$.

 \end{rem}

 If $\rho$ is a generator of the canonical line of a generalized complex structure $\mathcal I$, $e^B\wedge \rho$ is a generator of a $B$-field transform of $\mathcal I$ and $\iota_{e^\beta} \rho$ is the $\beta$-transform of $\mathcal I$. However, the $\beta$-transform doesn't preserve the Courant bracket.

\smallskip

\section{ Linear generalized hypercomplex structures}

We  introduce now the notion of generalized hypercomplex structure on a vector space $V$.
A generalized hypercomplex structure on $V$  is a pair of anti-commuting complex structures ${\mathcal I,J}$  on  $V\oplus V^*$which are compatible with the symmetrized pairing (\ref{pairing})
Then $K=IJ$ is also a compatible complex structure and the set $$
{\mathcal I}_{a,b,c}= \{ aI+bJ+cK \, \mid \, a^2+b^2+c^2=1 \}$$ forms an $S^2$-family of complex structures compatible with $< \cdot , \cdot >$. We consider hypercomplex structures to be the same if they define the same family.

\smallskip
\
Note that the existence of a generalized hypercomplex structure on the vector space $V$ implies that the dimension of $V$ has to be a  multiple of $4$. This is for instance  a consequence of the results in Section 4 of \cite{Pantilie}.

\smallskip

 The two basic examples of generalized hypercomplex structure are the usual hypercomplex structure on $V$ and  holomorphic symplectic structure, corresponding to  a complex structure and two  symplectic structures $\omega_1, \omega_2$ such that $\omega_1+i \omega_2$ is  a non-degenerate $(2,0)$-form \cite{St1}. The existence of $B$-transform symmetry makes the general case a little more complicated. The first observation is
 \begin{lemma}
 Let ${\mathcal I}_1 \neq \pm{\mathcal I}_2$ be two generalized complex structures on $V\oplus V^*$. Then they determine a generalized hypercomplex structure iff  there is $p, |p|<1$, such that
 \begin{equation}\label{1}
 {\mathcal I}_1{\mathcal I}_2+{\mathcal I}_2{\mathcal I}_1=2p \, {\mbox{Id}}.
 \end{equation}

 \end{lemma}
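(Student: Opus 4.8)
The plan is to exploit the fact that, on a vector space, a generalized complex structure is simply a complex structure on $V\oplus V^*$ that is orthogonal for the pairing \eqref{pairing}; equivalently, using $\mathcal{I}^2=-{\mbox{Id}}$, it is skew-adjoint, $\mathcal{I}^*=-\mathcal{I}$. There is no integrability to verify in the linear setting, so the whole statement reduces to linear algebra inside the (at most) two-dimensional real span of $\mathcal{I}_1,\mathcal{I}_2$ together with the identity. I would treat the two implications separately, but both hinge on the same Gram--Schmidt-type construction, and I would record the skew-adjointness reduction first, since it is what makes the otherwise purely formal quaternionic algebra legitimate.

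For the forward implication, suppose $\mathcal{I}_1,\mathcal{I}_2$ belong to a common family, and write $\mathcal{I}_1=a\mathcal{I}+b\mathcal{J}+c\mathcal{K}$ and $\mathcal{I}_2=a'\mathcal{I}+b'\mathcal{J}+c'\mathcal{K}$ with $a^2+b^2+c^2=a'^2+b'^2+c'^2=1$. Expanding $\mathcal{I}_1\mathcal{I}_2+\mathcal{I}_2\mathcal{I}_1$ with the quaternionic relations $\mathcal{I}^2=\mathcal{J}^2=\mathcal{K}^2=-{\mbox{Id}}$, $\mathcal{I}\mathcal{J}=\mathcal{K}=-\mathcal{J}\mathcal{I}$, etc., the terms proportional to $\mathcal{I},\mathcal{J},\mathcal{K}$ are antisymmetric under interchanging the two coefficient triples and hence cancel, leaving $\mathcal{I}_1\mathcal{I}_2+\mathcal{I}_2\mathcal{I}_1=-2(aa'+bb'+cc')\,{\mbox{Id}}$. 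Thus \eqref{1} holds with $p=-(aa'+bb'+cc')$, and by Cauchy--Schwarz $|p|\le 1$, with equality precisely when the two unit coefficient vectors are parallel, i.e. when $\mathcal{I}_1=\pm\mathcal{I}_2$; since this is excluded by hypothesis, $|p|<1$.

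For the converse, assume \eqref{1} with $|p|<1$. Set $\mathcal{I}:=\mathcal{I}_1$ and define
$$\mathcal{J}:=\frac{1}{\sqrt{1-p^2}}\bigl(\mathcal{I}_2+p\,\mathcal{I}_1\bigr).$$
A short computation using only $\mathcal{I}_i^2=-{\mbox{Id}}$ and \eqref{1} shows that $\mathcal{J}$ anti-commutes with $\mathcal{I}$ (the correction term $p\,\mathcal{I}_1$ is chosen exactly to kill the symmetric part $2p\,{\mbox{Id}}$) and that $\mathcal{J}^2=-{\mbox{Id}}$, the normalization being dictated by the identity $(\mathcal{I}_2+p\,\mathcal{I}_1)^2=-(1-p^2)\,{\mbox{Id}}$. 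Since $\mathcal{I}_1,\mathcal{I}_2$ are skew-adjoint and $p$ is real, $\mathcal{J}$ is skew-adjoint, hence orthogonal, hence a genuine generalized complex structure; then $(\mathcal{I},\mathcal{J},\mathcal{K}=\mathcal{I}\mathcal{J})$ is a generalized hypercomplex structure. Inverting the definition yields $\mathcal{I}_2=-p\,\mathcal{I}+\sqrt{1-p^2}\,\mathcal{J}$, whose coefficient vector $(-p,\sqrt{1-p^2},0)$ has unit length, so both $\mathcal{I}_1$ and $\mathcal{I}_2$ lie in the $S^2$-family generated by $\mathcal{I}$ and $\mathcal{J}$.

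The algebraic manipulations are routine; the step requiring genuine care is confirming that $\mathcal{J}$ is an admissible generalized complex structure rather than merely an endomorphism squaring to $-{\mbox{Id}}$. This is exactly where the two hypotheses are consumed: $|p|<1$ guarantees that $\sqrt{1-p^2}\neq 0$, so that $\mathcal{J}$ is well defined and $\mathcal{J}^2=-{\mbox{Id}}$ holds, while the compatibility of $\mathcal{I}_1,\mathcal{I}_2$ with the pairing forces skew-adjointness of their real linear combination and hence orthogonality of $\mathcal{J}$.
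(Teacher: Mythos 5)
Your proof is correct and follows essentially the same route as the paper: the paper completes the pair to a family via the normalized commutator $\mathcal{K}=\tfrac{1}{2\sqrt{1-p^2}}(\mathcal{I}_1\mathcal{I}_2-\mathcal{I}_2\mathcal{I}_1)$, while you use the Gram--Schmidt combination $\mathcal{J}=\tfrac{1}{\sqrt{1-p^2}}(\mathcal{I}_2+p\,\mathcal{I}_1)$, and these are equivalent since $\mathcal{K}=\mathcal{I}_1\mathcal{J}$. Your write-up is in fact somewhat more complete, as it makes explicit the forward direction (via Cauchy--Schwarz) that the paper dismisses as well known.
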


 \begin{proof}
 The lemma is well known for the usual hypercomplex structures (see e.g. \cite{DGMY}).
 The family is constructed by noticing that $${\mathcal K} = \frac{1}{2\sqrt{1-p^2}}({\mathcal I}_1{\mathcal I}_2-{\mathcal I}_2 {\mathcal I}_1)$$ is a
 generalized complex structure which anti-commutes with ${\mathcal I}_1, {\mathcal I}_2$. Then ${\mathcal I}_1$ and  ${\mathcal K}$ define a
 generalized hypercomplex family which is the same as the one defined by ${\mathcal I}_2$ and ${\mathcal K}$.  The compatibility with  the pairing $\langle   \cdot \, ,  \cdot \, \rangle$ follows from the compatibility of $\mathcal{I}_1$ and $\mathcal{I}_2$, and the equation (\ref{1}).
 \end{proof}

 Now consider the action of $B$-transforms. If  $${\mathcal I}=  \left(  \begin{array}{cc} 0& -\omega^{-1} \\ \omega & 0\end{array} \right)$$
 is a symplectic generalized complex structure, then
 \[
\begin{array}{l}
Ad(exp(B))({\mathcal I})=   \left(  \begin{array}{cc} 1& 0 \\ B & 1\end{array} \right)\left(  \begin{array}{cc} 0& -\omega^{-1} \\ \omega & 0\end{array} \right) \left(  \begin{array}{cc} 1 & 0 \\ -B & 1\end{array} \right)= \left(  \begin{array}{cc} \omega^{-1}B& -\omega^{-1}\\  \omega+B\omega^{-1}B& -B\omega^{-1}\end{array} \right).
\end{array}
\]
 Similarly if ${\mathcal J}= \left(  \begin{array}{cc} J&0\\ 0 & -J^*\end{array} \right)$ is a generalized complex structure of complex type
then
\[
\begin{array}{l}
Ad(exp(B))({\mathcal I})=   \left(  \begin{array}{cc} 1& 0 \\ B & 1\end{array} \right)\left(  \begin{array}{cc} J&0\\ 0 & -J^*\end{array} \right) \left(  \begin{array}{cc} 1& 0 \\ -B & 1\end{array} \right) =  \left(  \begin{array}{cc} J& 0 \\ BJ+J^*B & -J^*\end{array} \right).
\end{array}
 \]

From here we note that $B$-transformation of a structure of complex type like $\mathcal{I}$ 
depends only on the $(2,0)+(0,2)$-part of $B$, since for
$(1,1)$-form $BJ+J^*B=0$. Then the following Lemma is natural:

 \vspace{.1in}

 \begin{lemma}\label{hcxtype}
Let ${\mathcal I}_1, {\mathcal I}_2$ be two generalized complex
structures  on $V  \oplus V^*$ satisfying (\ref{1}) which arise from complex
structures on $V$ up to a $B$-transforms. Then the corresponding
generalized hypercomplex family is given by $$\{{\mathcal
I}_{a,b,c} = Ad(exp(B+B_{a,b,c}))I_{a,b,c} \, \mid \,  (a,b,c)\in S^2\}$$
such that:

(i) $\{I_{a,b,c} \, | \, (a,b,c)\in S^2\}$ are generalized complex
structures which arise from complex structures defining a usual
hypercomplex structure on $V$;

(ii) $B$ is some fixed $2$-form independent of $(a,b,c)$;

(iii) $B_{a,b,c}$ are self-dual 2-forms depending on $(a,b,c)$ -
i.e forms of type $(2,0)+(0,2)$ with respect to every  $I_{a',b',c'}$ for $(a',b',c')\in S^2$.

\end{lemma}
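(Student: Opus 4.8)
The plan is to pass to block form with respect to the splitting $V\oplus V^*$ and to read the three assertions off the three blocks of the defining relation (\ref{1}). First I record the normal form I will use. As computed just above the statement, a structure arising from a complex structure $J$ up to a $B$-transform has block form $\begin{pmatrix} J & 0 \\ BJ+J^*B & -J^*\end{pmatrix}$, and its lower-left entry depends only on the $(2,0)+(0,2)$-part of $B$ with respect to $J$. Conversely, an orthogonal $\mathcal I$ with $\mathcal I^2=-\mathrm{Id}$ whose upper-right block vanishes is necessarily of this shape, with $J$ its upper-left block and the lower-left block an arbitrary $(2,0)+(0,2)$-form $C$, since $B\mapsto BJ+J^*B$ is an isomorphism of $(2,0)+(0,2)$-forms with inverse $C\mapsto-\tfrac12 CJ$. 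Thus \emph{being of complex type up to a $B$-transform is equivalent to the vanishing of the upper-right block}, and I may write $\mathcal I_i=\begin{pmatrix} J_i & 0\\ C_i & -J_i^*\end{pmatrix}$ with $C_i$ of type $(2,0)+(0,2)$ for $J_i$.

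\textbf{The underlying hypercomplex structure and the type of every member.} Expanding (\ref{1}) blockwise, the upper-left block gives $J_1J_2+J_2J_1=2p\,\mathrm{Id}_V$. Since $|p|<1$, the cases $J_1=\pm J_2$ (which would force $p=\mp1$) are excluded, so by the usual-hypercomplex analogue of the preceding Lemma (see \cite{DGMY}) the pair $J_1,J_2$ generates a genuine hypercomplex structure $\{J_{a,b,c}\}$; the associated complex-type structures $\{I_{a,b,c}\}$ are the family in (i). As in the preceding Lemma the family is generated by $\mathcal A=\mathcal I_1$, $\mathcal C=\tfrac{1}{2\sqrt{1-p^2}}(\mathcal I_1\mathcal I_2-\mathcal I_2\mathcal I_1)$ and $\mathcal B=\mathcal A\mathcal C$. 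A block computation shows that $\mathcal I_1\mathcal I_2$ and $\mathcal I_2\mathcal I_1$, hence $\mathcal C$ and $\mathcal B$, again have vanishing upper-right block; consequently every $\mathcal I_{a,b,c}=a\mathcal A+b\mathcal B+c\mathcal C$ has vanishing upper-right block, so it is of complex type up to a $B$-transform, its upper-left block is the hypercomplex $J_{a,b,c}$, and its lower-left block is the linear expression $\tilde C_{a,b,c}=aC_A+bC_B+cC_C$, which by $\mathcal I_{a,b,c}^2=-\mathrm{Id}$ is of type $(2,0)+(0,2)$ for $J_{a,b,c}$.

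\textbf{Extracting the fixed $B$ and the self-dual corrections.} For each member I recover its canonical $B$-field as the unique $(2,0)+(0,2)_{J_{a,b,c}}$-solution of $\tilde B\,J_{a,b,c}+J_{a,b,c}^*\tilde B=\tilde C_{a,b,c}$; since such $\tilde B$ satisfies $\tilde B\,J_{a,b,c}=J_{a,b,c}^*\tilde B$, this yields $\tilde B_{a,b,c}=-\tfrac12\,\tilde C_{a,b,c}J_{a,b,c}$, a form \emph{quadratic} in $(a,b,c)$. I then split this quadratic dependence on $S^2$ into its $SO(3)$-isotypic pieces: the trace ($\ell=0$) part is constant on $S^2$ and I take it as the fixed $B$ of (ii), while the trace-free ($\ell=2$) part is the genuinely $(a,b,c)$-dependent remainder $B_{a,b,c}$. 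It remains to show $B_{a,b,c}$ is self-dual. For this I expand the relation that $\tilde C_{a,b,c}$ is $(2,0)+(0,2)$ for $J_{a,b,c}$ as a polynomial identity in $(a,b,c)$ and match coefficients against the quaternionic relations among $I,J,K$: the pure monomials recover the relation just used, while the mixed monomials give $C_i(J_jX,J_jY)+C_j(J_iX,J_jY)+C_j(J_jX,J_iY)=-C_i(X,Y)$ and $\sum_{\sigma}C_{\sigma(1)}(J_{\sigma(2)}X,J_{\sigma(3)}Y)=0$, which constrain $C_A,C_B,C_C$.

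\textbf{Main obstacle.} The delicate step is the last one: verifying that exactly the $\ell=0$ part is realized by a single fixed $B$-field, while the $\ell=2$ remainder lands in the self-dual subspace, i.e. is annihilated by ``$I_{a',b',c'}$-conjugation plus identity'' for every $(a',b',c')$. This is precisely where the cross-relations above are consumed, and the only genuinely fiddly point is the sign bookkeeping in $J_iJ_j=\pm J_k$ and the transposed relations $J_i^*J_j^*=\mp J_k^*$. I expect it to be cleanest to organize this through the decomposition of $\Lambda^2V^*$ under the twistor $SU(2)$-action, identifying the self-dual forms with the isotypic summand on which the sphere acts with the appropriate weight and checking directly that the residual $B_{a,b,c}$ lies there; note that in the four-dimensional case this summand is trivial, forcing $B_{a,b,c}=0$, whereas for $4n$ with $n\ge 2$ it is nonzero, which is what leaves room for the behaviour exhibited by the examples in Section 5.
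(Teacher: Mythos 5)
Your reduction to block--lower-triangular form and your proof of (i) are correct, and in one respect sharper than the paper's argument: the observation that ``maximal type up to a $B$-transform $\iff$ vanishing upper-right block,'' combined with the fact that such endomorphisms form an algebra, cleanly shows that \emph{every} member of the family is a $B$-transformed complex structure over the corresponding $J_{a,b,c}$, with lower-left block linear in $(a,b,c)$. The paper instead normalizes $\mathcal I_1$ by a single $B$-field transform so that it is an untransformed complex structure (that normalization \emph{is} the fixed $B$ of part (ii)), and then reads everything off the lower-left block of (\ref{1}) for the pair $(\mathcal I_1,\mathcal I_2)$: that block gives $C_2J_1=J_1^*C_2$, i.e.\ the residual form of $\mathcal I_2$ lies in $\Lambda^{(2,0)+(0,2)}_{J_1}\cap\Lambda^{(2,0)+(0,2)}_{J_2}$, and the $B$-fields of the remaining members are then generated from this one form. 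Up to this point the two arguments are essentially equivalent.

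The genuine gap is in your step for (iii), which you yourself flag as the ``main obstacle'': it is not carried out, and as designed it would not close, for two concrete reasons. First, the fixed $B$ of the statement does not contribute a constant to the canonical $B$-field $\tilde B_{a,b,c}$ of the member $\mathcal I_{a,b,c}$; it contributes its $(2,0)+(0,2)_{J_{a,b,c}}$-projection $\tfrac12\bigl(B-B(J_{a,b,c}\cdot,J_{a,b,c}\cdot)\bigr)$, which is itself quadratic in $(a,b,c)$ with a nonzero $\ell=2$ component in general. So identifying the fixed $B$ with the $\ell=0$ isotypic piece of $\tilde B_{a,b,c}$ is not the decomposition the lemma asserts; the isotypic matching must be done against this $(a,b,c)$-dependent projection, not against a constant. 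Second, the subspace you want the $\ell=2$ remainder to land in --- forms of type $(2,0)+(0,2)$ with respect to \emph{every} $I_{a',b',c'}$ --- is the zero subspace in every dimension: if $\omega(IX,IY)=-\omega(X,Y)$ and $\omega(JX,JY)=-\omega(X,Y)$ for anticommuting $I,J$, then $\omega(KX,KY)=+\omega(X,Y)$ for $K=IJ$, so a form of type $(2,0)+(0,2)$ for all three must vanish. This also contradicts your closing dimension count (you expect this summand to be nonzero for $n\ge2$). What the block computation actually yields --- and what the paper's proof extracts --- is membership in the two-fold intersection $\Lambda^{(2,0)+(0,2)}_{J_1}\cap\Lambda^{(2,0)+(0,2)}_{J_2}$ for the two given structures; making the argument land requires working with that space and pinning down precisely what ``self-dual'' can mean in (iii), a point on which the published statement itself needs the same care.
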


\begin{proof} First note that   a $B$-transformation acts non-trivially on generalized complex structures of complex type only when $B$ is of
type $(2,0)+(0,2)$, so $$Ad(exp(B)){\mathcal I} =
Ad(exp(B^{(2,0)+(0,2)})){\mathcal I},$$ where $B^{(2,0)+(0,2)}$ is
the $(2,0)+(0,2)$-component of $B$. Since $$Ad(exp(B))({\mathcal
I}_1{\mathcal I}_2) = Ad(exp(B))({\mathcal
I}_1)Ad(exp(B))({\mathcal I}_2),$$ we have to check condition
(\ref{1}) for generalized complex structures of the type $$
\begin{array}{l} {\mathcal I}_1= \left(  \begin{array}{cc} J_1&0\\
0 & -J_1^*\end{array} \right)
\end{array}$$ and $$ \begin{array}{l}
{\mathcal I}_2= \left(  \begin{array}{cc} J_2&0\\ BJ_2+J_2^*B & -J_2^*\end{array} \right)
\end{array}.$$ First note that $$BJ_2+J^*_2B=2B_{J_2}^{(2,0)+(0,2)}.$$ A direct calculation shows that (\ref{1}) is satisfied if and only if
$J_1J_2+J_2J_1=2pId|_V$ and
$J^*_1B_{J_2}^{(2,0)+(0,2)}+B_{J_2}^{(2,0)+(0,2)}J_1 =0$. The
second identity is $(B_{J_2}^{(2,0)+(0,2)})_{J_1}^{(1,1)}=0$. From
above, we can consider $B=B_{J_2}^{(2,0)+(0,2)}$, so then $B\in
\Lambda^{(2,0)+(0,2)}_{J_2} \cap  \Lambda^{(2,0)+(0,2)}_{J_1}$.
Then one can easily check that $B$ is of type $(2,0)+(0,2)$ with
respect to any structure in the family determined by $J_1,J_2$ and
conversely - every structure in a hypercomplex family can be
changed by a $B$-transform in such way and still the family
defines generalized hypercomplex  structure. This proves the
Lemma.
\end{proof}

We call such generalized hypercomplex structures {\it $B$-twisted hypercomplex}, since the $B$-transformations depend on the point $(a,b,c)\in S^2$.

\vspace{.1in}

Next we consider the case when two generalized complex structures
of symplectic type satisfy (\ref{1}). To compare with the case of
complex structures we consider first the types of the structures
in the $S^2$-family of generalized complex structures, defined by
a holomorphic symplectic structure. Let $${\mathcal I}=\left(
\begin{array}{cc} J&0\\ 0 & -J^*\end{array} \right), \quad
{\mathcal J}=\left(  \begin{array}{cc} 0& -\omega^{-1}\\ \omega &
0\end{array} \right)
$$ for a complex structure $J$ and a symplectic structure $\omega$ such
that $J^*\omega=\omega J$. Then
 $${\mathcal K}={\mathcal IJ} = \left(  \begin{array}{cc} 0& -J\omega^{-1}\\ -J^* \omega  & 0\end{array} \right).$$ From $J^*\omega=\omega
 J$ we get $J^*=\omega J \omega^{-1}$ and $J=\omega^{-1}J^*\omega$. We check directly that
$$
\begin{array}{l}
(b \omega + c \omega J) (- b \omega^{-1} + c \omega^{-1} J^*)= -(b^2+c^2) {\mbox {Id}},\\[3 pt]
(b \omega J - c \omega) (- b \omega^{-1} + c \omega^{-1} J^*) = -
(b^2 + c^2) J^*,\\[3pt]
(- b \omega^{-1} + c \omega^{-1} J^*)(b \omega J - c \omega)=
-(b^2+c^2)J.
\end{array}
$$

Using this we see that
 \[
\begin{array}{l}
{\mathcal I}_{a,b,c}=a{\mathcal I}+b{\mathcal J}+c{\mathcal K} = \left(  \begin{array}{cc} aJ& -b\omega^{-1}+c\omega^{-1} J^*\\  b\omega+c\omega J & -aJ^*\end{array} \right) = \\[4 pt]
\left(  \begin{array}{cc} 1& 0\\ \frac{a}{b^2+c^2}(b\omega J-c\omega) & 1\end{array} \right)\left(  \begin{array}{cc} 0& -b\omega^{-1}+c\omega^{-1} J^* \\ \frac{1}{b^2+c^2}(b\omega+c\omega J) & 0\end{array} \right)\left(  \begin{array}{cc} 1& 0\\ -\frac{a}{{b^2+c^2}}(b\omega J-c\omega) & 1\end{array} \right)
\end{array}
\]
and  so every structure in the family except $\pm {\mathcal I}$ is
of symplectic type.

Now we check when $\mathcal{I}_1, \mathcal{I}_2$ of symplectic type define a holomorphic symplectic structure up to a $B$-transform. 

\begin{lemma}\label{hypersympl}
Let ${\mathcal I}_1, {\mathcal I}_2$ be two generalized complex structures of symplectic type  on $V \oplus  V^*$ satisfying (\ref{1}), such that
\[ \begin{array}{l}
{\mathcal I}_1= \left(  \begin{array}{cc} 0& -\omega_1^{-1}\\ \omega_1 & 0\end{array} \right)
\end{array}\]
 and
 \[ \begin{array}{l}
{\mathcal I}_2= \left(  \begin{array}{cc} \omega_2^{-1}B& -\omega_2^{-1}\\  \omega_2+B\omega_2^{-1}B& -B\omega_2^{-1}\end{array} \right).
\end{array}\]
Let $A=B\omega_2^{-1}$ and $D=\omega_1\omega_2^{-1}+pId$. Then
\begin{equation} \label{condequivAD}
A^2+D^2=(p^2-1)Id\hspace{.3in} AD=DA.
\end{equation}
Conversely,  for  given symplectic forms $\omega_1, \omega_2$,
a closed form $B$, and a real number $p$ such that the endomorphisms
$A$ and $D$ defined as above satisfy (\ref{condequivAD}),  the
corresponding ${\mathcal I}_1$ and ${\mathcal I}_2$ define a
generalized hypercomplex structure.

\end{lemma}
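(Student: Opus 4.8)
The plan is to prove the equivalence (\ref{1}) $\Leftrightarrow$ (\ref{condequivAD}) by a direct block computation, and then to obtain the converse by reading the same computation backwards together with the first Lemma of this section.

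First I would expand the anticommutator $\mathcal{I}_1\mathcal{I}_2 + \mathcal{I}_2\mathcal{I}_1$ in $2\times 2$ block form. Writing $\mathcal{I}_2 = \left(\begin{smallmatrix} P & Q \\ R & S\end{smallmatrix}\right)$ with $P = \omega_2^{-1}B$, $Q = -\omega_2^{-1}$, $R = \omega_2 + B\omega_2^{-1}B$, $S = -B\omega_2^{-1}$, condition (\ref{1}) becomes four block identities: the two diagonal ones $Q\omega_1 - \omega_1^{-1}R = 2p\,{\mbox{Id}}$ and $\omega_1 Q - R\omega_1^{-1} = 2p\,{\mbox{Id}}$, and the two off-diagonal ones $\omega_1 P + S\omega_1 = 0$ and $\omega_1^{-1}S + P\omega_1^{-1} = 0$. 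Here I would use repeatedly that $\omega_1,\omega_2,B$ (and hence $\omega_1^{-1},\omega_2^{-1}$) are skew — this is exactly the requirement that $\mathcal{I}_1,\mathcal{I}_2$ be orthogonal for the pairing (\ref{pairing}), which forces the off-diagonal blocks to be skew and the diagonal blocks to be minus transposes of one another. Taking transposes and using skewness then shows that the second diagonal identity is the transpose of the first, and that the two off-diagonal identities are conjugate by $\omega_1$; hence (\ref{1}) is equivalent to just one diagonal and one off-diagonal identity.

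Next I would translate these two surviving identities into the language of $A = B\omega_2^{-1}$ and $D = \omega_1\omega_2^{-1} + p\,{\mbox{Id}}$. For the off-diagonal one, substituting $P,S$ gives $\omega_2^{-1}B\omega_1^{-1} = \omega_1^{-1}B\omega_2^{-1}$; conjugating both sides by $\omega_1$ yields $\omega_1\omega_2^{-1}B = B\omega_2^{-1}\omega_1$, which, after inserting the definitions and cancelling the common term $pB\omega_2^{-1}$, is exactly $AD = DA$. For the diagonal identity, substituting $Q,R$ and multiplying on the left by $\omega_1$ and on the right by $\omega_2^{-1}$ clears the inverses; recognizing $R\omega_2^{-1} = {\mbox{Id}} + A^2$, $\omega_1\omega_2^{-1}\omega_1\omega_2^{-1} = (D-p\,{\mbox{Id}})^2$ and $2p\,\omega_1\omega_2^{-1} = 2p(D-p\,{\mbox{Id}})$, the identity collapses to $A^2 + (D-p\,{\mbox{Id}})^2 + 2p(D-p\,{\mbox{Id}}) + {\mbox{Id}} = 0$, i.e. $A^2 + D^2 = (p^2-1)\,{\mbox{Id}}$. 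This diagonal step is the main bookkeeping; the one point requiring care is tracking the skew-symmetry signs so that the four block equations genuinely reduce to the two identities of (\ref{condequivAD}).

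For the converse, all the manipulations above are reversible, since they only involve left/right multiplication and conjugation by the invertible skew matrices $\omega_1,\omega_2$. Hence if $A$ and $D$ satisfy (\ref{condequivAD}), then $\mathcal{I}_1,\mathcal{I}_2$ satisfy (\ref{1}). Provided $|p|<1$ (which in particular forces $\mathcal{I}_1\neq\pm\mathcal{I}_2$, as $\mathcal{I}_1=\pm\mathcal{I}_2$ would give $p=\mp 1$), the first Lemma of this section then produces the anticommuting structure $\mathcal{K} = \tfrac{1}{2\sqrt{1-p^2}}(\mathcal{I}_1\mathcal{I}_2 - \mathcal{I}_2\mathcal{I}_1)$ and the associated $S^2$-family, so that $\mathcal{I}_1,\mathcal{I}_2$ define a generalized hypercomplex structure. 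I would also remark that at this purely linear stage the closedness of $B$ plays no role and is only needed for integrability in the manifold version. The expected obstacle is not any single computation but the combination of skew-symmetry bookkeeping in the reduction step and the need to carry the hypothesis $|p|<1$ in order to invoke the first Lemma.
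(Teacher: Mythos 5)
Your proof is correct and follows essentially the same route as the paper: expand the anticommutator in $2\times 2$ block form, use skew-symmetry (equivalently, conjugation by $\omega_1$) to cut the four block identities down to one diagonal and one off-diagonal equation, and then translate those into the two conditions on $A$ and $D$. One small improvement over the paper's write-up: you derive $AD=DA$ from the off-diagonal block alone by right-multiplying by $\omega_2^{-1}$, which decouples the two conditions of \eqref{condequivAD} and makes the converse (which the paper dismisses with ``follows directly'') genuinely reversible step by step, and you are right to flag that $|p|<1$ must be carried along to invoke the first Lemma of Section~3 for the converse.
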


\begin{proof} The proof is a direct calculation:
\[
{\mathcal I}_1{\mathcal I}_2 = \left(  \begin{array}{cc} -\omega_1^{-1}\omega_2 -\omega_1^{-1}B\omega_2^{-1}B& \omega_1^{-1}B\omega_2^{-1}\\  \omega_1\omega_2^{-1}B&- \omega_1\omega_2^{-1}\end{array} \right)
\]
and
\[
{\mathcal I}_2{\mathcal I}_1 = \left(  \begin{array}{cc} -\omega_2^{-1}\omega_1& - \omega_2^{-1}B\omega_1^{-1}\\  -B\omega_2^{-1}\omega_1& -\omega_2\omega_1^{-1}-B\omega_2^{-1}B\omega_1^{-1} \end{array} \right).
\]

Then \eqref{1}  is equivalent to the system
\begin{equation} \label{systemAD}
\left \{ \begin{array}{l}
\omega_2^{-1}\omega_1+\omega_1^{-1}\omega_2+\omega_1^{-1}B\omega_2^{-1}B= -2p \,  {\mbox{Id}},\\
\omega_1\omega_2^{-1}+\omega_2\omega_1^{-1}+B\omega_2^{-1}B\omega_1^{-1}= -2p \, {\mbox{Id}},\\
\omega_1^{-1}B\omega_2^{-1}-\omega_2^{-1}B\omega_1^{-1}=0,\\
\omega_1\omega_2^{-1}B-B\omega_2^{-1}\omega_1=0.
\end{array} \right.
\end{equation}
Since $$\omega_1
(\omega_2^{-1}\omega_1+\omega_1^{-1}\omega_2+\omega_1^{-1}B\omega_2^{-1}B)\omega_1^{-1}
=\omega_1\omega_2^{-1}+\omega_2\omega_1^{-1}+B\omega_2^{-1}B\omega_1^{-1}$$
and
$$\omega_1(\omega_1^{-1}B\omega_2^{-1}-\omega_2^{-1}B\omega_1^{-1})\omega_1
=\omega_1\omega_2^{-1}B-B\omega_2^{-1}\omega_1, $$ the third and the
four (resp. the first and second) equations in \eqref{systemAD}
are equivalent. So it is sufficient to show that the second and
third equation in \eqref{systemAD} are equivalent to the equations
\eqref{condequivAD}.

 By the definition of $D$ and the second equation  in \eqref{systemAD} we have
\begin{equation} \label{expressD} D = \omega_1\omega_2^{-1}+p \, {\mbox{Id}} =
- \omega_2 \omega_1^{-1} - B \omega_2^{-1} B \omega_1^{-1} - p \,  {\mbox{Id}}.
\end{equation}
Thus, by \eqref{expressD}    and the definition of $A$ we get
$$
AD = B\omega_2^{-1}(- \omega_2 \omega_1^{-1} - B \omega_2^{-1} B
\omega_1^{-1} - p Id) = - B \omega_1^{-1} - B \omega_2^{-1} B
\omega_2^{-1} B \omega_1^{-1} - p B \omega_2^{-1}.
$$
Similarly we have
$$
DA = - \omega_2 \omega_1^{-1} B \omega_2^{-1} - B \omega_1^{-1}  B \omega_2^{-1} B \omega_2^{-1} - p B \omega_2^{-1}.
$$
Multiplying the third equation in \eqref{systemAD} by $\omega_2$
to the left gives
$$
\omega_2 \omega_1^{-1} B \omega_2^{-1}  = B \omega_1^{-1}
$$
and multiplying by $B$ to the left gives
$B\omega_1^{-1}B\omega_2^{-1} = B\omega_2^{-1}B\omega_1^{-1}$, so
$$
B\omega_1^{-1} B \omega_2^{-1} B \omega_2^{-1} = B \omega_2^{-1} B
\omega_1^{-1} B \omega_2^{-1} = B \omega_2^{-1} B \omega_2^{-1} B
\omega_1^{-1}
$$

This proves $AD = DA$. For the second relation between $A$ and $D$
we obtain
$$
A^2 + D^2 = (B \omega_2^{-1})^2 + (\omega_1 \omega_2^{-1} + p \,  {\mbox{Id}})^2 =  B \omega_2^{-1}B \omega_2^{-1} + \omega_1 \omega_2^{-1}  \omega_1 \omega_2^{-1} + 2 p \omega_1 \omega_2^{-1}  + p^2 \, {\mbox{Id}}.
$$
By the second equation   in \eqref{systemAD} $ 2 p  \, {\mbox{Id}} = - \omega_1
\omega_2^{-1} - \omega_2 \omega_1^{-1} - B \omega_2^{-1} B
\omega_1^{-1} $ and multiplying by $\omega_1\omega_2^{-1}$
$$
2p \, \omega_1\omega_2^{-1} = -\omega_1 \omega_2^{-1}  \omega_1
\omega_2^{-1}- {\mbox{Id}} - B \omega_2^{-1}B \omega_2^{-1}
$$
so
$$
B \omega_2^{-1} B \omega_2^{-1} + \omega_1 \omega_2^{-1}  \omega_1 \omega_2^{-1} + 2 p  \, \omega_1 \omega_2^{-1}  = - {\mbox{Id}}.
$$
Therefore $A^2 + D^2 = (p^2 - 1) {\mbox{Id}}$. The converse follows
directly.

\end{proof}

From here  we see that two  generalized complex
structures  of symplectic type satisfying \eqref{1} will define a holomorphic symplectic structure when
they are not twisted by a $B$-transform. However, in general this
is not true. We call the generalized hypercomplex structures which
are $B$-transformations with $B$  not depending on $(a,b,c)\in
S^2$ of the structures in Lemma \ref{hypersympl} {\it $B$-transformed
hypersymplectic}.

\smallskip

 Now we want to see when to see when two generalized complex structures as in \eqref{1} define a holomorphic symplectic structure:
 \begin{lemma}\label{holosymplectic}  Suppose the generalized complex structures $\mathcal{I}_1, \mathcal{I}_2$ are as in Lemma \ref{hypersympl} satisfying \eqref{condequivAD}. Then the generalized hypercomplex structure they define contains a structure of maximal type (i.e.  a  $B$-transformed complex structure) if and only if there are real numbers  $a,b,c$ such that
 \begin{equation}\label{2}
 cB = a\omega_2+b\omega_1.
 \end{equation}
In case $c\neq 0$ the equations \eqref{systemAD} is equivalent to the single equation:
\begin{equation}\label{holosymp}
(\omega_2^{-1}\omega_1 + \gamma Id)^2 = \theta  {\mbox{Id}},
\end{equation}
for some  real numbers $\gamma$ and  $\theta<0$.
\end{lemma}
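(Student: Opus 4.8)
The plan is to detect a maximal-type member of the $S^2$-family by the vanishing of its bivector block, to read off \eqref{2} from this vanishing, and then to reduce \eqref{systemAD} to \eqref{holosymp} by completing a square. Throughout I use the standard fact that a generalized complex structure on $V\oplus V^*$ is of maximal (complex) type, i.e. a $B$-transform of a complex structure, precisely when its upper-right block $Q\colon V^*\to V$ vanishes (the type equals $n-\tfrac12\,\mathrm{rank}\,Q$). The family consists of the elements of square $-\mathrm{Id}$ in the real span of $\mathcal{I}_1$, $\mathcal{I}_2$ and $\mathcal{K}=\frac{1}{2\sqrt{1-p^2}}(\mathcal{I}_1\mathcal{I}_2-\mathcal{I}_2\mathcal{I}_1)$, and these three structures are linearly independent because $|p|<1$. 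From the matrices of Lemma \ref{hypersympl} the blocks $Q$ of $\mathcal{I}_1$ and $\mathcal{I}_2$ are $-\omega_1^{-1}$ and $-\omega_2^{-1}$; for $\mathcal{K}$ the third equation of \eqref{systemAD} symmetrizes $\omega_1^{-1}B\omega_2^{-1}=\omega_2^{-1}B\omega_1^{-1}$, so its block is $\tfrac{1}{\sqrt{1-p^2}}\,\omega_1^{-1}B\omega_2^{-1}$. Hence $a\mathcal{I}_1+b\mathcal{I}_2+z\mathcal{K}$ has vanishing block iff $-a\omega_1^{-1}-b\omega_2^{-1}+\tfrac{z}{\sqrt{1-p^2}}\,\omega_1^{-1}B\omega_2^{-1}=0$, and multiplying by $\omega_1$ on the left and $\omega_2$ on the right turns this into $\tfrac{z}{\sqrt{1-p^2}}\,B=a\omega_2+b\omega_1$, which is \eqref{2} with $c=z/\sqrt{1-p^2}$.

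This already gives both directions of the first equivalence. If such a member exists its block vanishes, producing a nonzero solution $(a,b,c)$ of \eqref{2}; conversely a nonzero solution yields $a\mathcal{I}_1+b\mathcal{I}_2+z\mathcal{K}$ with $z=c\sqrt{1-p^2}$, which is nonzero by linear independence, squares to a negative multiple of $\mathrm{Id}$, and so normalizes to a genuine complex-type member of the family.

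For the second statement I assume \eqref{2} with $c\neq0$ and set $B=\lambda\omega_2+\mu\omega_1$ with $\lambda=a/c$, $\mu=b/c$. Writing $N:=\omega_1\omega_2^{-1}$, the endomorphisms of Lemma \ref{hypersympl} become $A=B\omega_2^{-1}=\lambda\,\mathrm{Id}+\mu N$ and $D=N+p\,\mathrm{Id}$, both polynomials in $N$; the relation $AD=DA$ in \eqref{condequivAD} therefore holds automatically, and only $A^2+D^2=(p^2-1)\mathrm{Id}$ remains. Expanding it gives $(\mu^2+1)N^2+2(\lambda\mu+p)N+(\lambda^2+1)\mathrm{Id}=0$, and dividing by $\mu^2+1>0$ and completing the square yields $(N+\gamma\,\mathrm{Id})^2=\theta\,\mathrm{Id}$ with $\gamma=\frac{\lambda\mu+p}{\mu^2+1}$ and $\theta=\gamma^2-\frac{\lambda^2+1}{\mu^2+1}$. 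All steps are reversible, so \eqref{systemAD} is equivalent to this single equation \eqref{holosymp}. It remains to see $\theta<0$: after clearing denominators this is the negativity of $(\lambda\mu+p)^2-(\lambda^2+1)(\mu^2+1)$, which is immediate from the identity $(\lambda\mu+p)^2-(\lambda^2+1)(\mu^2+1)=-(\lambda-\mu p)^2-(\mu^2+1)(1-p^2)$ together with $|p|<1$. The only conceptual step is the first — recognizing that maximal type means the vanishing of the bivector block and that this block is governed by the three explicit matrices — after which the argument is a conjugation by $\omega_1,\omega_2$ followed by an elementary completion of squares.
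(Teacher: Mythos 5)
Your proof is correct and follows essentially the same route as the paper: you characterize the maximal-type member by the vanishing of the upper-right (Poisson) block of $a\mathcal{I}_1+b\mathcal{I}_2+z\mathcal{K}$, conjugate by $\omega_1$ and $\omega_2$ to obtain \eqref{2}, and then substitute $B$ into $A^2+D^2=(p^2-1)\,\mathrm{Id}$ and complete the square to get \eqref{holosymp}. You are in fact slightly more explicit than the paper in verifying the converse direction of the first equivalence and in proving $\theta<0$ via the identity $(\lambda\mu+p)^2-(\lambda^2+1)(\mu^2+1)=-(\lambda-\mu p)^2-(\mu^2+1)(1-p^2)$, where the paper only remarks that it follows from $|p|<1$.
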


\begin{proof}
If we take $\mathcal{K} = \frac{1}{2\sqrt{1-p^2}}[\mathcal{I}_1, \mathcal{I}_2]$, then the hypercomplex family  defined by $\mathcal I_1$ and $\mathcal I_2$  is in the span of $\mathcal{I}_1, \mathcal{I}_2, \mathcal{K}$, so by the assumption of the Lemma there are $a,b,c$ such that the type of $\mathcal{I}_{a,b,c} = a\mathcal{I}_1+b\mathcal{I}_2+c\mathcal{K}$ is maximal. A direct computation of the Poisson vector of $\mathcal{I}_{a,b,c}$ gives
\[
-a\omega^{-1}_1-b\omega_2^{-1} +\frac{c}{\sqrt{1-p^2}}\omega_1^{-1} B \omega^{-1}_2 = 0.
\]
Multiplying by $\omega_1$ to the left and $\omega_2$ to the right we obtain \eqref{2} with $c$ replacing $\frac{c}{\sqrt{1-p^2}}$.
From here we see that the equation $AD = DA$ in the system \eqref{condequivAD} is satisfied and the first equation in \eqref{condequivAD} is
\[
(B\omega_2^{-1})^2+(\omega_1\omega_2^{-1}+p Id)^2 = (p^2-1) {\mbox{Id}}
\]
If $c\neq 0$, the equation \eqref{2} becomes
\[
B=\alpha\omega_1+\beta\omega_2
\]
and substituting it in the first equation of \eqref{condequivAD} we obtain
\[
(\alpha^2+1)(\omega_1\omega_2^{-1})^2+2(\alpha\beta+p)\omega_1\omega_2^{-1} = -(1+\beta^2) {\mbox{Id}}
\]
which by completing the square gives \eqref{holosymp} with $\gamma=\frac{\alpha\beta+p}{\alpha^2+1}, \theta = (\frac{\alpha\beta+p}{\alpha^2+1})^2-\frac{1+\beta^2}{1+\alpha^2}$. The fact that $\theta<0$ follows from $-1<p<1$.
\end{proof}

Now we consider the case that one of the structures in the family has maximal type, i.e. is equivalent up to a B-transformed complex structure. By fixing the complex structure, we see that a $B$-transformed holomorphic symplectic structure is
described by the following:

\begin{lemma}\label{B-holosympl}
Let ${\mathcal I}_1, {\mathcal I}_2$ be two generalized complex
structures on $V \oplus V^*$  satisfying (\ref{1}), such that
\[ \begin{array}{l}
{\mathcal I}_1= \left(  \begin{array}{cc} J& 0\\
0 & -J^*\end{array} \right)
\end{array}\]
 and
 \[ \begin{array}{l}
{\mathcal I}_2= \left(  \begin{array}{cc} \omega^{-1}B&
-\omega^{-1}\\  \omega+B\omega^{-1}B& -B\omega^{-1}\end{array}
\right)
\end{array}\]

Then the generalized hypercomplex structure defined by ${\mathcal
I}_1$ and ${\mathcal I}_2$  is a $B$-transformed  holomorphic symplectic
structure.
\end{lemma}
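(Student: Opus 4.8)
The plan is to extract algebraic constraints from the anticommutation relation \eqref{1} and then strip off the twisting by a single globally defined $(1,1)$-form, reducing the given family to a genuine holomorphic symplectic one. First I would expand \eqref{1} blockwise for $\mathcal{I}_1 = \left(\begin{smallmatrix} J & 0 \\ 0 & -J^*\end{smallmatrix}\right)$ and the given $\mathcal{I}_2$. The two off-diagonal blocks both force $J\omega^{-1} = \omega^{-1}J^*$, i.e. $J^*\omega = \omega J$, so that $\omega$ is of type $(2,0)+(0,2)$ with respect to $J$; this is exactly the compatibility needed for a holomorphic symplectic structure. The two diagonal blocks both reduce to $BJ + J^*B = 2p\,\omega$, and invoking the computation preceding Lemma \ref{hcxtype} (a $(1,1)$-form $\beta$ satisfies $\beta J + J^*\beta = 0$, while a $(2,0)+(0,2)$-form $\gamma$ satisfies $\gamma J + J^*\gamma = 2\gamma J$) this isolates the self-dual part of $B$ as $B^{(2,0)+(0,2)} = -p\,\omega J$, leaving the $(1,1)$-part $B^{(1,1)}$ free. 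With these in hand the remaining block equation becomes an identity, so it imposes nothing further.

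Next I would untwist. Since $B^{(1,1)}$ is of type $(1,1)$ with respect to $J$, the transform $Ad(\exp(-B^{(1,1)}))$ fixes the complex-type structure $\mathcal{I}_1$ (a $(1,1)$-form acts trivially on structures of complex type, by Lemma \ref{hcxtype}), while it sends $\mathcal{I}_2 = Ad(\exp(B))\mathcal{I}_\omega$ to $\mathcal{I}_2' := Ad(\exp(-p\,\omega J))\mathcal{I}_\omega$, because $B - B^{(1,1)} = B^{(2,0)+(0,2)} = -p\,\omega J$. Applying this same global transform to the whole $S^2$-family yields a new generalized hypercomplex family with the same $p$, generated by $\mathcal{I}_1$ and $\mathcal{I}_2'$. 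Substituting $B' = -p\,\omega J$ into the symplectic $B$-transform formula gives $\mathcal{I}_2' = \left(\begin{smallmatrix} -pJ & -\omega^{-1} \\ (1-p^2)\omega & pJ^*\end{smallmatrix}\right)$, and one checks that $\mathcal{I}_1\mathcal{I}_2' + \mathcal{I}_2'\mathcal{I}_1 = 2p\,{\mbox{Id}}$, as conjugation requires.

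Finally I would identify the reduced family as holomorphic symplectic. Setting $\mathcal{K}' = \frac{1}{2\sqrt{1-p^2}}[\mathcal{I}_1, \mathcal{I}_2']$ (the generalized complex structure produced by the construction in the first Lemma of this section), the commutator is purely off-diagonal and the factor $1-p^2$ cancels, so that $\mathcal{K}'$ is the pure symplectic structure $\mathcal{I}_{\omega''}$ with $\omega'' = -\sqrt{1-p^2}\,\omega J$; in particular $\omega''$ is nondegenerate (here $|p|<1$ is used) and satisfies $J^*\omega'' = \omega'' J$. Thus $\mathcal{I}_1 = \mathcal{I}_J$ and $\mathcal{K}' = \mathcal{I}_{\omega''}$ are precisely the data of the holomorphic symplectic family computed before Lemma \ref{hypersympl}, and since $\mathcal{I}_2' = -p\,\mathcal{I}_1 - \sqrt{1-p^2}\,\mathcal{I}_1\mathcal{K}'$ lies in their span, that holomorphic symplectic family coincides with the reduced one. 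Therefore the original family is the $Ad(\exp(B^{(1,1)}))$-transform of a holomorphic symplectic structure, which is the assertion.

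The main obstacle is the type bookkeeping: pinning down the self-dual part of $B$ exactly as $-p\,\omega J$ (and not as a multiple of $\omega$) is what makes the untwisted pair satisfy \eqref{1} again, and the precise normalization $1/(2\sqrt{1-p^2})$ is what makes $\mathcal{K}'$ square to $-{\mbox{Id}}$ and hence be a genuine symplectic structure — a wrong constant would leave $\mathcal{K}'^2$ a nontrivial scalar multiple of the identity. Everything else is a routine block computation using only $J^*\omega = \omega J$ and $J^2 = -{\mbox{Id}}$.
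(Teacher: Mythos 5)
Your argument is correct. The first half coincides with the paper's: expanding \eqref{1} blockwise yields exactly the system \eqref{systemholosympl}, whose off-diagonal content is $\omega J=J^*\omega$ and whose diagonal content collapses to $J^*B+BJ=2p\,\omega$, with the remaining block an identity. Where you diverge is in how the reduction to an honest holomorphic symplectic structure is carried out. The paper forms $\mathcal{K}=\frac{1}{\sqrt{1-p^2}}[\mathcal{I}_1,\mathcal{I}_2]$, observes via its Poisson block that it has maximal rank, replaces $\mathcal{I}_2$ by $\mathcal{K}$ so as to reduce to the case $p=0$, and only then concludes that the twisting form is $(1,1)$ and fixes $\mathcal{I}_1$. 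You instead decompose $B$ by type directly, pin down $B^{(2,0)+(0,2)}=-p\,\omega J$ from the relation $J^*B+BJ=2p\,\omega$ (using that $\omega$ and $\omega J$ are $J$-anti-invariant), and conjugate the whole family by $Ad(\exp(-B^{(1,1)}))$, which fixes $\mathcal{I}_1$ and sends $\mathcal{I}_2$ to the explicit structure with blocks $-pJ$, $-\omega^{-1}$, $(1-p^2)\omega$, $pJ^*$; the commutator construction then exhibits the untwisted family as the one generated by $\mathcal{I}_J$ and $\mathcal{I}_{\omega''}$ with $\omega''=-\sqrt{1-p^2}\,\omega J$. Your route buys something the paper's terser argument leaves implicit: an explicit formula for the residual $(1,1)$-form and for the symplectic form of the underlying holomorphic symplectic structure, and it avoids having to re-derive the $B$-transform presentation of $\mathcal{K}$ with a new, unnamed $2$-form. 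The normalization checks ($\mathcal{K}'^2=-{\mbox{Id}}$, nondegeneracy of $\omega''$ from $|p|<1$) are all in order.
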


\begin{proof}
Again we have:
\[
{\mathcal I}_1{\mathcal I}_2 = \left(  \begin{array}{cc}
J\omega^{-1}B& -J \omega^{-1}\\
-J^*\omega-J^*B\omega^{-1}B& J^*B\omega^{-1}\end{array} \right)
\]
and
\[
{\mathcal I}_2{\mathcal I}_1 = \left(  \begin{array}{cc}
-\omega^{-1}BJ& - \omega^{-1}J^*\\
\omega J +B\omega^{-1}BJ& B\omega^{-1}J^*
\end{array} \right).
\]
So (\ref{1}) is equivalent to:
\begin{equation} \label{systemholosympl}
\left \{ \begin{array}{l}
\omega J=J^*\omega \\
\omega J-J^*\omega +B\omega^{-1}BJ-J^*B\omega^{-1}B = 0\\
J\omega^{-1}B+\omega^{-1}BJ=2pId\\
J^*B\omega^{-1}+B\omega^{-1}J^*=2pId.
\end{array} \right.
\end{equation}

Similarly as before using the first equation we see that the third
and fourth equations are equivalent to $J^*B+BJ=2p \omega$.

Now consider the structure ${\mathcal
K}=\frac{1}{\sqrt{1-p^2}}[{\mathcal I}_1, {\mathcal I}_2]$. By
direct calculation using (\ref{systemholosympl}) we see that
${\mbox{rank}} ({\mathcal K})=  {\mbox{rank}}(\omega^{-1}J^*)$, which is maximal. Then
${\mathcal K}$ is a $B$-transform of some symplectic structure and
the generalized hypercomplex structure defined by ${\mathcal I}_1$
and ${\mathcal I}_2$ is the same as the one defined by ${\mathcal
I}_1$ and ${\mathcal K}$. So we can replace ${\mathcal I}_2$ by
${\mathcal K}$ in the Lemma and consider only the case $p=0$. But
then from above $J^*B + BJ =0$ and $Ad(\exp (B)){\mathcal I}_1 =
{\mathcal I}_1$. From here it follows that the generalized
hypercomplex structure defined by ${\mathcal I}_1, {\mathcal I}_2$
arises from a holomorphic symplectic structure which is
transformed by $B$.

\end{proof}

\vspace{.2in}

\section{Generalized hypercomplex manifolds and their twistor spaces}

Recall that the  direct sum  $TM  \oplus  T^*M$  of the tangent and the cotangent bundle of a manifold  $M$ carries a Courant bracket
$$
[X+ \xi,Y + \eta]=[X,Y]+L_X \eta -L_Y \xi  - \frac 12 d(i_X  \eta - i_Y \xi), \quad X,Y \in \Gamma (TM), \xi, \eta \in \Gamma (T^* M).
$$
The Courant bracket admits more symmetries than just diffeomorphisms. The extra symmetries are called $B$-field transformations and are generated by closed $2$-forms on $M$. They correspond to $B$-transformations of $T_pM \oplus T^*_pM$ at each point $p$ in $M$. Note that the Courant bracket we define is skew-symmetric and doesn't satisfy the Jacobi identity, but it is possible to define a bracket, which satisfies the Jacobi identity and is not skew-symmetric.

A {\em generalized almost complex structure} on $M$  is an endomorphism $\mathcal{I}$ of $TM \oplus T^*M$ of square $- {\mbox{Id}}$  and compatible with the canonical pairing. The $\mathcal{I}$ splits the complexification $(TM \oplus T^*M)\otimes\mathbb{C}$ into complex $\pm\sqrt{-1}$-eigenbundles $E^{(1,0)}$ and $E^{(0,1)}$ of type $(1,0)$ and $(0,1)$.
A generalized almost complex structure is called integrable if one (and hence both) of these eigenbundles are closed under the complexified Courant bracket. It is equivalent also to vanishing of the generalized version of the Nijenhuis tensor, which is again a real tensor, defined via the Courant bracket. An integrable generalized almost complex structure is called {\em generalized complex structure}.

Now let ${\mathcal I, J}$ be anti-commuting generalized complex structures. By the result in \cite{St2}, ${\mathcal K}={\mathcal IJ}=-{\mathcal JI}$ is also integrable and every structure in  the family $\{{\mathcal J}_{\bf a} \, | \, {\bf a}\in S^2\}$ is also integrable.

\begin{definition}
The  {\em twistor space} of a generalized almost hypercomplex manifold $M$ is the smooth manifold $Z=M\times S^2$ endowed
with the generalized almost complex structure ${\mathbb I}$ defined at $T_{(p,{\bf a})}Z \oplus T^*_{(p,{\bf a})}Z$ as
\[
{\mathbb I}|_{(p,{\bf a})} = {\mathcal I}_{\bf a}|_{T_pM \oplus T^*_pM}\oplus {\mathcal J}|_{T_{\bf a}S^2 \oplus T^*_{\bf a}S^2},
\]
where ${\mathcal J}$  is the generalized complex structure on $S^2$ determined by the canonical complex structure via $S^2\cong{\mathbb C}P^1$.
\end{definition}

The definition is \lq \lq half" of the definition in  \cite{Brendhaber}  of a twistor space of generalized hyperk\"ahler manifold. Note that we can use the Fubini-Studi symplectic form on $S^2$ for ${\mathcal J}$ in the definition and the following Theorem is still valid:
\begin{theorem}
When $M$ is generalized hypercomplex, the structure ${\mathbb I}$ on $Z$ preserves the generalized tangent bundles of $M$ and $S^2$ and is integrable, i.e. it defines a generalized complex structure.
\end{theorem}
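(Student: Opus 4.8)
The plan is to treat the two assertions separately, the first being formal and the second the substance. Preservation of the generalized tangent bundles of $M$ and of $S^2$ is immediate from the definition: under the canonical identification $T_{(p,{\bf a})}Z\oplus T^*_{(p,{\bf a})}Z\cong (T_pM\oplus T^*_pM)\oplus(T_{\bf a}S^2\oplus T^*_{\bf a}S^2)$ the endomorphism $\mathbb I$ is block diagonal, acting as $\mathcal I_{\bf a}$ on the first summand and as $\mathcal J$ on the second. The same splitting shows $\mathbb I^2=-\mathrm{Id}$ and orthogonality with respect to the pairing on $Z$, which is the orthogonal direct sum of the pairings on $M$ and on $S^2$. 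Hence $\mathbb I$ is a generalized almost complex structure, and it remains to prove integrability, i.e. that the $+i$-eigenbundle
\[
E_{\mathbb I}|_{(p,{\bf a})}=E_{\bf a}|_p\oplus F|_{\bf a}
\]
is closed under the complexified Courant bracket on $Z$, where $E_{\bf a}\subset(TM\oplus T^*M)\otimes\C$ is the $+i$-eigenbundle of $\mathcal I_{\bf a}$ and $F\subset(TS^2\oplus T^*S^2)\otimes\C$ is that of $\mathcal J$.

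The decisive step I would carry out is to produce a local frame of $E_{\bf a}$ that depends \emph{holomorphically} on the base. Complexifying $TM\oplus T^*M$, the anticommuting $\mathcal I,\mathcal J,\mathcal K$ generate a quaternionic action; writing $E$ for the $+i$-eigenbundle of $\mathcal I$, the quaternion relations give that $\mathcal J+i\mathcal K$ maps $E$ into $\overline E$ and annihilates $\overline E$, while $\mathcal J-i\mathcal K$ does the reverse. A short computation with $\mathcal I_{\bf a}=a\mathcal I+b\mathcal J+c\mathcal K$ and a stereographic coordinate $\zeta$ on $S^2\cong\CP^1$ then yields, for a suitable affine coordinate,
\[
E_{\bf a}=\Big\{\,u+\tfrac{\zeta}{2i}\,(\mathcal J+i\mathcal K)u\ \big|\ u\in E\,\Big\},
\]
where the identification $S^2\cong\CP^1$ is oriented so that this graph depends holomorphically (rather than antiholomorphically) on $\zeta$. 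Since $\mathcal I,\mathcal J,\mathcal K$ live on $M$, a local frame $\{u_\alpha\}$ of $E$ over $M$ is independent of the $S^2$-point, so the sections $s_\alpha(\zeta)=u_\alpha+\tfrac{\zeta}{2i}(\mathcal J+i\mathcal K)u_\alpha$ are manifestly holomorphic in $\zeta$; together with the vertical generators $\partial_{\bar\zeta}$ and $d\zeta$ of $F$ they form a local frame of $E_{\mathbb I}$.

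With this frame the integrability check splits into three cases. For two horizontal sections $s_i(\zeta),s_j(\zeta)$ the Courant bracket on $Z$ decomposes into the ``frozen'' bracket taken in the $M$-directions at fixed $\zeta$, which lies in $E_\zeta$ because each $\mathcal I_{\bf a}$ is integrable, plus corrections from differentiating the $\zeta$-dependence of the coefficients; the dangerous $d\bar\zeta$-terms (which would lie in $\overline F$) vanish precisely because the $s_\alpha$ are holomorphic in $\zeta$, while the surviving $d\zeta$-terms lie in $F$. The vertical brackets among $\partial_{\bar\zeta}$ and $d\zeta$ close because $\mathcal J$ on $\CP^1$ is integrable, and the mixed brackets $[s_i(\zeta),\partial_{\bar\zeta}]$ and $[s_i(\zeta),d\zeta]$ vanish by the same holomorphicity together with $\iota_X d\zeta=0$ for $M$-directional $X$. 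To pass from frame elements to arbitrary sections I would invoke the Leibniz identity $[u,f v]=f[u,v]+(\pi(u)f)\,v-\langle u,v\rangle\,df$: although $df$ has a $d\bar\zeta$-component outside $E_{\mathbb I}$, its coefficient $\langle u,v\rangle$ vanishes because $E_{\mathbb I}$ is isotropic, so closure on the frame propagates to all sections.

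The hard part is the holomorphic-frame step: one must read off from the quaternion relations the exact $\zeta$-dependence of $E_{\bf a}$ and, crucially, fix the orientation of $S^2\cong\CP^1$ so that this dependence is holomorphic and matches $\mathcal J$ on the base — the naive orientation produces antiholomorphic dependence and a \emph{non}integrable $\mathbb I$. Once this is arranged, the remaining computations are routine. Finally, the variant with the Fubini--Study symplectic form in place of the complex structure on $S^2$ is handled analogously: there $F$ is the graph of $-i\,\omega_{\mathrm{FS}}$, which is automatically Courant-closed, and the same holomorphicity of the horizontal frame together with the isotropy of $E_{\mathbb I}$ yields closure.
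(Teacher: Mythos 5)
Your proof is correct, and its skeleton is the same as the paper's (which is only a sketch): check Courant-involutivity of the $+i$-eigenbundle of ${\mathbb I}$ on a spanning family of sections adapted to the product $Z=M\times S^2$, splitting into horizontal--horizontal, vertical--vertical and mixed brackets, with integrability of each ${\mathcal I}_{\bf a}$ handling the frozen horizontal part. Where you genuinely diverge is in the choice of spanning sections, and your choice is the better one. The paper uses the pointwise projections $X^h=(X,\alpha)-i{\mathbb I}_{\bf a}(X,\alpha)$ of pulled-back real sections and asserts $[X^h,U]=0$ for vertical $U$; since ${\mathcal I}_\lambda$ depends on $\bar\lambda$ as well as $\lambda$, these projections are not holomorphic in the fibre coordinate, the mixed brackets are not literally zero, and showing they still land in the $(1,0)$-bundle already requires knowing that the eigenbundle varies holomorphically --- the very point the paper leaves implicit. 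Your frame $s_\alpha(\zeta)=u_\alpha+\tfrac{\zeta}{2i}({\mathcal J}+i{\mathcal K})u_\alpha$ (note $({\mathcal J}+i{\mathcal K})u=2{\mathcal J}u$ on $E$, so this is exactly the paper's basis $Z_i+\lambda\overline{W_i},\,W_i-\lambda\overline{Z_i}$ from the proof of Theorem 4.3, up to an affine change of coordinate) makes the $\bar\zeta$-independence manifest, makes the mixed brackets genuinely vanish, and isolates where the orientation of $S^2\cong{\mathbb C}P^1$ enters. The Leibniz-plus-isotropy step for passing from a frame to arbitrary sections, and the remark that the Fubini--Study variant works because the graph of $-i\,\omega_{\mathrm{FS}}$ is Courant-closed, are both correct and fill in details the paper omits.
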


The proof is the same is the proof for the usual twistor space for hypercomplex manifolds.  Take sections $X^h, Y^h$ of  $T^cM \oplus (T^*)^cM$ and $U,V$ of $T^cS^2 \oplus (T^*)^cS^2$ of the complexified generalized tangent bundles which are of type $(1,0)$ with respect to ${\mathbb I}$ and such that $$X^h|_{(p, {\bf a})}=(X,\alpha)|p-i{\mathbb I}_{\bf a}(X,\alpha)|_p, Y^h|_{(p, {\bf a})}=(Y,\beta)|p-i{\mathbb I}_{\bf a}(Y,\beta)|_p$$ for some sections $(X,\alpha), (Y,\beta)$ of $TM \oplus T^*M$. Clearly such sections span the $(1,0)$-subspace of $T^cZ \oplus (T^*)^cZ$ at each point $(p,{\bf a})$. Then one can check directly that $[X^h,Y^h]$ and $[U,V]$ are in the same $(1,0)$-subspace while $[X^h,U]=[X^h,V]=[Y^h,U]=[Y^h,V]=0$.

\vspace{.1in}

It is useful for the twistor theory to identify $S^2$ with ${\mathbb C}P^1  \cong  \C \cup \{ \infty \}$ via the stereographic projection: $$St:\lambda \in  \C \cup \{ \infty \} \rightarrow  \left (\frac{1-|\lambda|^2}{1+|\lambda|^2}, \frac{i(\overline{\lambda}-\lambda)}{1+|\lambda|^2}, \frac{\lambda+\overline{\lambda}}{1+|\lambda|^2} \right)\in S^2.$$
If ${\mathcal I}_{\lambda}$ denotes the structure corresponding to
$\lambda$ via the inverse map $St^{-1}(a,b,c)$, then ${\mathcal
I}={\mathcal I}_0$ and ${\mathcal J}={\mathcal I}_{\sqrt{-1}}$.
Moreover there is a basis $Z_1,W_1,...,Z_n,W_n$ of $(1,0)$-vectors
with respect to $I$ such that $J(Z_i)=-\overline{W_i},
J(W_i)=\overline{Z_i}$. Then the structure $I_{\lambda}$ has
$(1,0)$-space given by $$Span\{ Z_i+\lambda\overline{W_i},
W_i-\lambda\overline{Z_i} \, | \,  i = 1,2...,n\}.$$ Note that ${\mathcal
I}_{\lambda}$ exhaust all structures in the family except
$-{\mathcal I}$, which corresponds to the point at infinity of the
complex plane with parameter $\lambda$.

\vspace{.1in}

We want to compare the type of the generalized complex structure on $Z$ with the types of the generalized hypercomplex structures consider in Section 3. To this end we need the following characterization of generalized hypercomplex structures, which generalizes the result in \cite{St2} mentioned above:

\begin{theorem}
Two generalized complex structures $\mathcal{I}_1$ and $\mathcal{I}_2$ are part of a generalized hypercompllex family if and only if they satisfy
\begin{equation}
 {\mathcal I}_1{\mathcal I}_2+{\mathcal I}_2{\mathcal I}_1=2p \, {\mbox{Id}}
 \end{equation}
for a  real number $p$ with $|p|<1$.

\end{theorem}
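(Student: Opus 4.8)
The plan is to prove the substantive implication --- that the pointwise relation \eqref{1}, for a \emph{constant} $p$ with $|p|<1$, forces every member of the $S^2$-family to be integrable --- by transferring the question to the twistor space $Z=M\times S^2$ and running a Kaledin-type holomorphicity argument. The reverse implication is immediate: if $\mathcal{I}_1,\mathcal{I}_2$ sit at fixed points ${\bf a}_1,{\bf a}_2\in S^2$ of a generalized hypercomplex family, the quaternionic relations for $\mathcal{I},\mathcal{J},\mathcal{K}$ give $\mathcal{I}_1\mathcal{I}_2+\mathcal{I}_2\mathcal{I}_1=-2({\bf a}_1\cdot{\bf a}_2)\,\text{Id}$, so $p=-{\bf a}_1\cdot{\bf a}_2$ is a constant with $|p|<1$ because ${\bf a}_1\neq\pm{\bf a}_2$.

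For the forward direction I would first invoke the linear Lemma establishing \eqref{1} to see that at each point $\mathcal{K}=\frac{1}{2\sqrt{1-p^2}}[\mathcal{I}_1,\mathcal{I}_2]$ is a generalized almost complex structure anticommuting with $\mathcal{I}_1$, and that $(\mathcal{I}_1,\mathcal{K})$ generates the same family $\{\mathcal{I}_\lambda\}$ as $(\mathcal{I}_1,\mathcal{I}_2)$, now parametrized holomorphically by $\lambda\in\mathbb{CP}^1\cong S^2$ exactly as in the twistor discussion above: locally there is a frame $Z_1,W_1,\dots$ of the $(1,0)$-space $E_0$ of $\mathcal{I}_1$ with $\mathcal{K}(Z_i)=-\overline{W_i}$, $\mathcal{K}(W_i)=\overline{Z_i}$, and $E_\lambda=\operatorname{Span}\{Z_i+\lambda\overline{W_i},\,W_i-\lambda\overline{Z_i}\}$. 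The point is that the spanning vectors are affine in $\lambda$, so the family $\{E_\lambda\}$ assembles into a holomorphic subbundle $\mathcal{E}\subset((TM\oplus T^*M)\otimes\mathbb{C})\otimes\mathcal{O}_{\mathbb{CP}^1}$; a direct computation in this frame, splitting the trivial bundle into the rank-two pieces $\operatorname{Span}(Z_i,\overline{W_i})$ and $\operatorname{Span}(W_i,\overline{Z_i})$, shows $\mathcal{E}\cong\mathcal{O}(-1)^{r}$ and hence that the quotient $Q:=((TM\oplus T^*M)\otimes\mathbb{C})/\mathcal{E}\cong\mathcal{O}(1)^{r}$, where $r=\dim_\mathbb{C}E_\lambda$.

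Next I would package the integrability obstructions into one holomorphic object. For each $\lambda$ the failure of $\mathcal{I}_\lambda$ to be integrable is measured by the map $N_\lambda:\Lambda^2 E_\lambda\to Q_\lambda$, $(s,t)\mapsto\pi_\lambda[s,t]$, where $[\cdot,\cdot]$ is the Courant bracket and $\pi_\lambda$ the projection onto $Q_\lambda=(TM\oplus T^*M)_\mathbb{C}/E_\lambda$; this is tensorial by the Leibniz rule of the Courant bracket together with the isotropy of $E_\lambda$, just as in the proof of the first Theorem of this section. Since the frame $Z_i+\lambda\overline{W_i},\,W_i-\lambda\overline{Z_i}$ is holomorphic in $\lambda$, the bracket is $\mathbb{C}$-bilinear in the $M$-variables, and $\pi_\lambda$ is the holomorphic quotient projection, the tensor $N_\lambda$ is a \emph{holomorphic} section of $\mathcal{N}:=\Lambda^2\mathcal{E}^*\otimes Q$. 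From the splitting types this gives $\mathcal{N}\cong\mathcal{O}(3)^{\oplus N}$, so in the induced frame $N_\lambda$ is a tuple of polynomials in $\lambda$ of degree at most $3$.

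Finally I would use the reality structure to force enough zeros. Complex conjugation carries $E_\lambda$ to $\overline{E_\lambda}=E_{\sigma(\lambda)}$, where $\sigma(\lambda)=-1/\overline\lambda$ is the antipodal map and $\mathcal{I}_{\sigma(\lambda)}=-\mathcal{I}_\lambda$; since the Courant bracket is real, $N_\lambda=0$ if and only if $N_{\sigma(\lambda)}=0$. By hypothesis $N$ vanishes at $\lambda=0$ (for $\mathcal{I}_1$) and at the fixed point $\lambda_2$ representing $\mathcal{I}_2$; applying $\sigma$ yields vanishing also at $\infty$ and at $-1/\overline{\lambda_2}$. As $\mathcal{I}_1\neq\pm\mathcal{I}_2$, the four points $0,\infty,\lambda_2,-1/\overline{\lambda_2}$ are distinct, and a degree-$\leq 3$ polynomial with four distinct zeros is identically zero. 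Hence $N_\lambda\equiv 0$, every $\mathcal{I}_\lambda$ is integrable, and $(\mathcal{I}_1,\mathcal{I}_2)$ is part of a generalized hypercomplex family. The main obstacle is precisely the holomorphicity-and-degree step: one must verify carefully that $N_\lambda$ is genuinely $\overline\partial$-closed in $\lambda$ rather than merely smooth, and that the obstruction bundle is exactly $\mathcal{O}(3)$, since the argument lives or dies on matching the degree $3$ against the four forced zeros.
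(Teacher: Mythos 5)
Your proposal is correct and follows essentially the same route as the paper: both run Kaledin's argument, writing the Nijenhuis obstruction of $\mathcal{I}_\lambda$ in the frame $X_i-\lambda\overline{Y_i},\ Y_i+\lambda\overline{X_i}$ as a degree-$3$ polynomial in $\lambda$ and killing it with the four distinct roots coming from $\pm\mathcal{I}_1,\pm\mathcal{I}_2$ (one at $\infty$). Your packaging of the obstruction as a holomorphic section of $\Lambda^2\mathcal{E}^*\otimes Q\cong\mathcal{O}(3)^{\oplus N}$ and your explicit reality argument for the roots at $\infty$ and $-1/\overline{\lambda_2}$ are just a more bundle-theoretic rendering of the same degree count the paper does by hand.
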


\begin{proof}
The analog for the usual hypercomplex structures is proven by D. Kaledin \cite{Kaledin} and here we use his approach. We first notice that by Lemma 2.3 we obtain a generalized almost complex family containing $\mathcal{I}_1$ and $\mathcal{I}_2$.

First consider the linear hypercomplex structure on $V\oplus V^*$ for $V=T_pM$ for some $p$, defined by anticommuting $\mathcal{I}, \mathcal{J}$ and (1,0)-vectors $X,Y$  for $\mathcal{I}$ such that $\mathcal{J}(X)=\overline{Y}$, so $\mathcal{J}(Y) = -\overline{X}$. Since $\mathcal{J}$ maps the  $(1,0)$-space for $\mathcal{I}$ isomorphically to the (0,1)-space, there is a basis of this (1,0)-space
 $X_1,Y_1,X_2,Y_2,...X_n,Y_n$ with  $\mathcal{J}(X_i)=\overline{Y_i}, \mathcal{J}(Y_i) = -\overline{X_i}$. Using the stereographic projection, on can consider $\mathcal{I}_{a,b,c} = \mathcal{I}_{\lambda}$ for $\lambda \in \mathbb{CP}^1$ being the image of $(a,b,c)\in S^2$. Then as mentioned above one can see that for $\lambda\neq\infty$ a (1,0)-basis for  $\mathcal{I}_{\lambda}$ is given by
\[
 X_i^{\lambda} = X_i-\lambda \overline{Y_i},\hspace{.4in}
 Y_i^{\lambda} = Y_i + \lambda \overline{X_i}.
 \]
Here $\lambda=i$ corresponds to $\mathcal{J}$. Moreover $\lambda = \infty$ corresponds to $-\mathcal{I}$.

\vspace{.1in}

Now consider a manifold with generalized complex structures $\mathcal{I}_1, \mathcal{I}_2$ satisfying $\mathcal{I}_1\mathcal{I}_2 +\mathcal{I}_2\mathcal{I}_1 = 2p Id$ where $|p|<1$ is constant function. Then $$\mathcal{I}_1, \, (\mathcal{I}_2+p\mathcal{I}_1)/\sqrt{1-p^2}, \,  [\mathcal{I}_1, \mathcal{I}_2]/2\sqrt{1-p^2}$$ is a triple of anti-commuting generalized almost complex structures in general. We'll show that they are actually integrable. To this end consider the Nijenhuis tensor of ${\mathcal I}_{\lambda}$ coupled with the pairing  $< \cdot , \cdot >$ on  $ T M\oplus T^* M$ for the structure $\mathcal{I}_{\lambda}$:
\[ N_{\lambda}(U,V,W) = <[U,V]-\mathcal{I}_{\lambda}[\mathcal{I}_{\lambda}U,V]-\mathcal{I}_{\lambda}[U,\mathcal{I}_{\lambda}V]-[\mathcal{I}_{\lambda}U,\mathcal{I}_{\lambda}V], W>,
\]
where $[ \cdot , \cdot ]$ is the Courant bracket. The fact that this is a 3-form on $TM \oplus T^* M$ is proved in \cite{St2}.
It is clear that $\mathcal{I}_{\lambda}$ is integrable iff $N_{\lambda}=0$. Moreover $N_{\lambda}=0$ iff $N_{\lambda}(U_{\lambda},
V_{\lambda}, W_{\lambda})=0$ for every (1,0)-vector fields $ U_{\lambda},
V_{\lambda}, W_{\lambda}$ with respect to $\mathcal{I}_{\lambda}$. An important fact is that $$N_{\lambda}(U_{\lambda},
V_{\lambda}, W_{\lambda})= <[U_{\lambda},
V_{\lambda}], W_{\lambda}>.$$

\vspace{.1in}

Since $N_{\lambda}$ is a tensor, to prove Lemma 1.1, it is enough to fix an arbitrary vector fields $ U_{\lambda},
V_{\lambda}, W_{\lambda}$ among the basis $X_i^{\lambda}, Y_i^{\lambda}$ above. Then

 $$N_{\lambda}(U_{\lambda},
V_{\lambda}, W_{\lambda}) = \lambda^3 N_3 + \lambda^2 N_2 + \lambda N_1 + N_0$$ where $N_i$ for each $i=0,1,2,3$ are values of a function on $M$ of the type
$$ p \rightarrow <[X, Y] \vert_p, Z\vert_p>,$$
where $X,Y,Z$ are among  $X_j, Y_j, \overline{X_j},\overline{Y_j}$, $j=1,2,...,n$.
Now as in D. Kaledin's argument, we can fix the point $p$ and notice, that
 $N_{\lambda}(U_{\lambda},
V_{\lambda}, W_{\lambda})$  becomes a polynomial of degree 3 in the variable  $\lambda$,  with 4 different roots corresponding to $\pm\mathcal{I}_1,\pm\mathcal{I}_2$, one of them at $\infty$. But this means that it vanishes identically. Since we choose $p$ and
 $U_{\lambda},
V_{\lambda}, W_{\lambda}$ arbitrary, the theorem follows.

\end{proof}

\vspace{.1in}

By the result in \cite{Gualtieri}  at generic points  $(Z,{\mathbb I})$ is a product of complex and symplectic submanifolds and the dimension of
the complex submanifold is also the type of ${\mathbb I}$. From the definition it follows that the type of ${\mathbb I}$ is at least one. The next observation follows from Lemma \ref{hcxtype} and Lemma \ref{hypersympl}:
\begin{theorem}
If the real dimension of $M$ is $4n$ and the type of ${\mathbb I}$ on $Z$ is $2n+1$ in some open set, then the family ${\mathbb I}|{\bf a}$ locally is a $B$-twisted hypercomplex structure as in Lemma \ref{hcxtype}. If the type of ${\mathbb I}$ is one at a point, then the family ${\mathbb I}|{\bf a}$, locally is of the type described in Lemma \ref{hypersympl}. If in addition for two of the structures in the $S^2$-family satisfy \eqref{2} and \eqref{holosymp}, then the structure is B-transformed holomorphic symplectic.
\end{theorem}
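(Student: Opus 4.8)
The plan is to translate each clause of the theorem into a pointwise statement about the linear generalized hypercomplex structure on $T_pM \oplus T^*_pM$ and then invoke the linear classification from Section 3. The bridge between the type of $\mathbb{I}$ on $Z$ and the types occurring in the $S^2$-family is the twistor construction: at a point $(p,\mathbf{a})$ the structure $\mathbb{I}$ splits as $\mathcal{I}_{\mathbf{a}}$ on $T_pM\oplus T^*_pM$ plus the fixed type-$1$ complex structure $\mathcal{J}$ on $S^2$. Hence the type of $\mathbb{I}$ at $(p,\mathbf{a})$ equals $\mathrm{type}(\mathcal{I}_{\mathbf{a}}) + 1$, since the $S^2$-factor always contributes exactly one to the type (that is precisely why the type of $\mathbb{I}$ is at least one). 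So the hypotheses on $\mathbb{I}$ are really hypotheses on the generic type of $\mathcal{I}_{\mathbf{a}}$.

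First I would treat the maximal case. If the type of $\mathbb{I}$ is $2n+1$ on an open set, then by the above $\mathrm{type}(\mathcal{I}_{\mathbf{a}}) = 2n = \dim_{\mathbb R}V/2$ there, i.e.\ the generic structure in the family is of maximal (complex) type up to a $B$-transform. Applying Lemma \ref{hcxtype} pointwise, the two generating structures $\mathcal{I}_1,\mathcal{I}_2$ arise from honest complex structures up to $B$-transforms, and the whole family has the form $\mathcal I_{a,b,c} = Ad(\exp(B+B_{a,b,c}))I_{a,b,c}$ with $I_{a,b,c}$ an ordinary hypercomplex family, $B$ fixed and $B_{a,b,c}$ self-dual. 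This is exactly the assertion that $\mathbb{I}|_{\mathbf{a}}$ is locally $B$-twisted hypercomplex. Second, if the type of $\mathbb{I}$ is one at a point, then $\mathrm{type}(\mathcal{I}_{\mathbf{a}}) = 0$ there, so the generic member is of symplectic type; by the computation preceding Lemma \ref{hypersympl} two such symplectic-type structures satisfying \eqref{1} fall under the hypotheses of Lemma \ref{hypersympl}, giving the endomorphisms $A=B\omega_2^{-1}$, $D=\omega_1\omega_2^{-1}+p\,\mathrm{Id}$ with $A^2+D^2=(p^2-1)\mathrm{Id}$ and $AD=DA$, which is the structure described there. For the final clause, if additionally two members of the family satisfy \eqref{2} and \eqref{holosymp}, Lemma \ref{holosymplectic} shows the family contains a structure of maximal type, i.e.\ a $B$-transformed complex structure, and then Lemma \ref{B-holosympl} identifies the whole family as $B$-transformed holomorphic symplectic.

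The main obstacle I anticipate is the passage from pointwise to local data: the linear lemmas produce, at each point, forms and endomorphisms ($J_1,J_2,\omega_1,\omega_2,B,p$), and one must argue these assemble into smooth fields and that the relevant type is constant (not merely generically $2n+1$ or $1$) on the open set in question. Here the hypothesis that the type is controlled on an open set, together with upper-semicontinuity of the type of a generalized complex structure, should force the type to be locally constant, pinning down which stratum of the linear classification applies uniformly. A secondary point is verifying that the symplectic forms and the $B$-field obtained are closed, so that the $B$-transforms and symplectic structures are genuine; this follows because integrability of $\mathbb{I}$ (Theorem above) descends to integrability of each $\mathcal{I}_{\mathbf{a}}$, and the integrability of a symplectic-type or $B$-transformed complex-type generalized complex structure is equivalent to closedness of the relevant $2$-form. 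With these smoothness and closedness checks in place, the three statements are immediate corollaries of Lemmas \ref{hcxtype}, \ref{hypersympl}, \ref{holosymplectic} and \ref{B-holosympl} applied fiberwise.
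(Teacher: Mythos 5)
Your proposal is correct and follows essentially the same route as the paper: translate the type hypothesis on $\mathbb{I}$ into the type of the fibre structures $\mathcal{I}_{\bf a}$ (using that the $S^2$-factor contributes exactly one), use that minimal type is an open condition to get constancy on a product neighbourhood $U\times V$, and then apply the linear Lemmas \ref{hcxtype}, \ref{hypersympl}, \ref{holosymplectic} and \ref{B-holosympl} to two structures taken from the family over $V$. You are in fact somewhat more explicit than the paper about the pointwise-to-local smoothness and closedness checks, which the paper leaves implicit.
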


\begin{proof}
We notice, that the conditions imply that the type is constant in an open set in both cases, since 1 is the minimal rank of $\mathbb{I}$ and being of minimal rank is an open condition. Such open set is of the form $U\times V$ with $U\subset M$ and $V\subset S^2$. Then in particular there are  two generalized complex structures $\mathcal{I}_1$ and $\mathcal{I}_2$ which satisfy the conditions of Lemmas \ref{hcxtype} or \ref{hypersympl} for some $p$ close enough to 1 since they will correspond to close $\lambda$ values in $V$.
\end{proof}

The question about description of the generalized hypercomplex structure in case the type is between $2$  and $2n$ is open at the moment.

\section{Examples on tori and Kodaira-Thurston surface}

\begin{ex}\label{ex1}  Let $e^1,f^1,...,e^{2n},f^{2n}$ be the standard basis of $({\mathbb R}^{4n})^*$ and
$$\omega_2 =\sum_{i} (e^{2i-1}\wedge f^{2i-1}-e^{2i}\wedge f^{2i})$$
 be the real part of the standard holomorphic symplectic form on $({\mathbb R}^{4n})^*$ and
 $$\omega_1 = \sum_{i} \lambda_i(e^{2i-1}\wedge f^{2i}+e^{2i}\wedge f^{2i-1})$$ and
$$B=\sum_{i} \mu_i(e^{2i-1}\wedge f^{2i}+e^{2i}\wedge f^{2i-1})$$ be \lq \lq diagonal deformations" of the imaginary part such that
 $\lambda_i^2+\mu_i^2=1$. Then the conditions of the Lemma \ref{hypersympl} are satisfied with $p=0$ and
 $$A=   {\mbox{diag}} \left ( \left(  \begin{array}{cccc} 0&0&- \mu_i&0\\ 0&0&0&-\mu_i\\
 \mu_i&0&0&0\\ 0&\mu_i&0&0\end{array} \right) \right ),
\quad D =  {\mbox{diag}}  \left (\left(  \begin{array}{cccc} 0&0&- \lambda_i&0\\ 0&0&0&-\lambda_i\\
 \lambda_i&0&0&0\\ 0&\lambda_i&0&0\end{array} \right) \right ).
 $$

From Lemma \ref{hypersympl} we obtain two generalised complex structures $\mathcal{I}_1, \mathcal{I}_2$ which define a generalized hypercomplex structure. However, the conditions of Lemma \ref{holosymplectic} lead to the fact that this structure  is  a $B$-transformed holomorphic symplectic  structure only if $\lambda_i=\lambda$ and $\mu_i=\mu$ for some $\lambda$ and $\mu$. On another side we see that the rank of the generalized complex structures in the family defined by this generalized hypercomplex structure could jump, since the rank of $a\omega_2+b\omega_1+cB$ could change for different $a,b,c$ depending on particular $\lambda_i, \mu_i$. However, no structure is of maximal type unless  $\lambda_i=\lambda$ and $\mu_i=\mu$ for some $\lambda$ and $\mu$.

\vspace{.2in}

\end{ex}

The above example of a linear generalized hypercomplex structure defines a generalized hypercomplex structure on $4n$-dimensional tori. To see this we just notice the basis $(e_1,f_1,...,e_{2n},f_{2n})$ could be selected among the standard vector fields and 1-forms $\frac{\partial}{\partial x_i}, dx_i, \frac{\partial}{\partial y_i}, dy_i$ for coordinates $(x_1,y_1,...,x_{2n},y_{2n})$ of the universal cover. Since all Courant brackets among them vanish, the generalized almost hypercomplex structure defined in Example \ref{ex1} is in fact generalized hypercomplex.  We can summarize these observations in:
\begin{prop}
The $4n$-dimensional tori admit a generalized hypercomplex structure in which the generic member of the family is of symplectic type, but the family contains no structure of complex type, hence is not $B$-transformed holomorphic symplectic.
\end{prop}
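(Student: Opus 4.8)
The plan is to take the linear generalized hypercomplex structure produced in Example \ref{ex1}, promote it to a translation-invariant structure on the flat torus $\mathbb{T}^{4n}=\mathbb{R}^{4n}/\mathbb{Z}^{4n}$, and then choose the deformation parameters $\lambda_i,\mu_i$ so that no member of the $S^2$-family can be of complex type. First I would fix the parameters. By Example \ref{ex1} the forms $\omega_1,\omega_2,B$ satisfy the hypotheses of Lemma \ref{hypersympl} with $p=0$, so the associated $\mathcal{I}_1,\mathcal{I}_2$ obey (\ref{1}) and generate a generalized hypercomplex family. To make $\mathcal{I}_1=\mathcal{I}_{\omega_1}$ genuinely of symplectic type I require $\omega_1$ non-degenerate, i.e. all $\lambda_i\neq 0$. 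The decisive choice is to take the pairs $(\lambda_i,\mu_i)$ on the unit circle $\lambda_i^2+\mu_i^2=1$ so that they are \emph{not} all proportional; this is possible exactly when $n\geq 2$ (for $n=1$ the single block forces $B$ to be a scalar multiple of $\omega_1$, whence Lemma \ref{holosymplectic} produces a complex-type member and the structure is $B$-transformed holomorphic symplectic). An explicit admissible choice is $(\lambda_1,\mu_1)=(1,0)$ and $(\lambda_2,\mu_2)=(1/\sqrt2,1/\sqrt2)$, with the remaining $\lambda_i\neq 0$ arbitrary.

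Next I would pass to the torus. Identifying $e^1,f^1,\dots,e^{2n},f^{2n}$ with the invariant coframe $dx_1,dy_1,\dots$ and the dual invariant frame $\partial_{x_1},\partial_{y_1},\dots$, the forms $\omega_1,\omega_2,B$ become translation-invariant, hence closed. Therefore $\mathcal{I}_1$ (symplectic type) and $\mathcal{I}_2$ (a $B$-transform of a symplectic structure by the closed form $B$) are integrable. Since all Courant brackets among the invariant frame and coframe vanish, each $(1,0)$-eigenbundle of every $\mathcal{I}_{a,b,c}$ is spanned by constant-coefficient sections and is automatically closed under the Courant bracket; alternatively one invokes the generalization of \cite{St2} proved above to conclude that the whole $S^2$-family is integrable. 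This yields a genuine generalized hypercomplex structure on $\mathbb{T}^{4n}$.

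Finally I would read off the two asserted properties. As $\mathcal{I}_{1,0,0}=\mathcal{I}_1$ is of symplectic type and having maximal bivector rank is an open condition on $S^2$, the generic member of the family is of symplectic type. For the absence of complex type I apply Lemma \ref{holosymplectic}: a member of maximal (complex) type exists if and only if there are reals $a,b,c$ with $cB=a\omega_2+b\omega_1$. Comparing coefficients on the two disjoint families of basis $2$-forms, namely $\{e^{2i-1}\wedge f^{2i-1},\,e^{2i}\wedge f^{2i}\}$ carrying $\omega_2$ and $\{e^{2i-1}\wedge f^{2i},\,e^{2i}\wedge f^{2i-1}\}$ carrying $\omega_1$ and $B$, forces $a=0$ and $c\mu_i=b\lambda_i$ for every $i$. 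By the non-proportionality of the $(\lambda_i,\mu_i)$ the only solution is $a=b=c=0$, so no member is of complex type. Since a $B$-transformed holomorphic symplectic family, in the sense of Lemma \ref{B-holosympl}, always contains a $B$-transformed complex (hence maximal-type) member, the structure cannot be $B$-transformed holomorphic symplectic.

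The main obstacle is not the globalization—on the flat torus integrability is immediate—but the simultaneous bookkeeping that guarantees (\ref{1}) with $p=0$, the non-degeneracy of $\omega_1$, and the non-proportionality of the $(\lambda_i,\mu_i)$ that is needed to kill every maximal-type member. This is precisely what makes the dimension constraint $n\geq 2$ essential and is the step where Lemma \ref{holosymplectic} does the real work, reducing a statement about the type of an entire $S^2$-family to the single linear condition (\ref{2}).
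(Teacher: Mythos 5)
Your proof is correct and follows essentially the same route as the paper: the translation-invariant structure of Example \ref{ex1} on the flat torus, integrability from the vanishing of all Courant brackets among the invariant frame and coframe, and Lemma \ref{holosymplectic} via condition \eqref{2} to exclude maximal-type members by choosing non-proportional pairs $(\lambda_i,\mu_i)$. The one place you go beyond the paper is the explicit remark that non-proportionality forces $n\geq 2$, so that for the $4$-torus this construction always produces a $B$-transformed holomorphic symplectic structure --- a caveat the paper's statement of the proposition leaves implicit.
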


Next we provide an example of compact $4$-manifold admitting a generalized hypercomplex structure which is neither  a hypercomplex nor  a holomorphic symplectic structure. The compact $4$-manifold is  the Kodaira-Thurston surface, i.e. the nilmanifold given by the compact quotient of $H_3 \times \R$ by a lattice, where $H_3$ is the real  Heisenberg $3$-dimensional Lie group.

In general the cotangent bundle  $T^* G$  of a Lie group $G$  with Lie algebra $\mathfrak g$  has a canonical Lie group
structure induced by the coadjoint action of  the Lie group $G$  on  the Lie algebra $\mathfrak g$   and  it has also a canonical bi-invariant
neutral metric. Hermitian structures on $T^*G$  such that left
translations are holomorphic isometries are given by  almost complex structures  of $\mathfrak g \oplus \mathfrak  g^*$
 which are orthogonal with respect to the neutral inner product
 $$
h( (X, \alpha), (Y, \beta)) = \frac{1}{2} (\beta (X) + \alpha (Y))
 $$
and satisfying  $N_J =0$, where  $N_J$ is  Nihenhuis tensor defined  with respect to the Lie bracket:
\begin{equation} \label{bracketcotang} [(X, \alpha), (Y, \beta)] = ([X, Y], - \beta \circ ad_X + \alpha \circ ad_Y).\end{equation}

 By  Proposition 3.1  in \cite{ABDF}  there is  a one-to-one correspondence between left-invariant  hermitian structures  $(J, h)$ on $T^* G$, where $h$ is  the
standard neutral metric on $T^* G$
and left-invariant generalized complex structures on $G$.

Consider on  $H_3 \times \R$ the  structure equations
$$ \begin{array}{l}
d e^j =0,  \, j = 0,1,2,\\[4pt]
d e^3 = - e^1 \wedge e^2.
\end{array}
$$

The cotangent  bundle  $T^* (H_3 \times \R)$ has, with respect to the Lie bracket \eqref{bracketcotang}, structure equations:
$$
\begin{array}{l}
d E^j =0, \,  j = 1,2,3,5, 8,\\[4pt]
d E^4 = - E^2 \wedge E^3,\\[4pt]
d E^6 =  -E^3 \wedge E^8,\\[4pt]
d E^7 = E^2 \wedge E^8,
\end{array}
$$
where $(E^j)$ is the dual basis of $(E_j)$ with
$$
\begin{array}{l}
E_1 = (e_0, 0),  \, E_2 = (e_1, 0),  \, E_3 = (e_2, 0),  \, E_4 = (e_3, 0),\\[3pt]
 E_5 =(e^0, 0),  \, E_6 =(e^1, 0),  \, E_7 =(e^2, 0),  \, E_8 =(e^3, 0)
 \end{array}
$$
and $(e^i)$ is the dual basis of  $(e_i)$.

The Lie group $H_3 \times \R$ has a  pair of anti-commuting   left-invariant generalized complex structures ${\mathcal I, \mathcal J}$ of type $1$, which are given respectively by the  left-invariant complex structures on $T^* (H_3 \times \R)$ defined  by the matrices
$$
I=  \left ( \begin{array}{cccccccc}  0&  -1& 0&  0&  0&  0& 0&  0\\[3pt]
 1&  0& 0&  0&  0&  0&  0& 0\\[3pt]
 0& 0& 0& 0& 0&0&0& 1\\[3pt]
 0& 0& 0& 0&0&0&-1& 0\\[3pt]
0& 0& 0& 0&0& -1& 0&0\\[3pt]
0& 0&0& 0& 1&0& 0& 0, \\[3pt]
 0&0& 0&1&0& 0&0&0\\[3pt]
 0&0& -1&0&0& 0&0&0 \end{array} \right )
$$
and
$$
J = \left ( \begin{array}{cccccccc}  1&-1& 0& 0& 0&0& b_1& b_2\\[3pt]
-1& -1& -b_2& b_1& 0& 0&0&0\\[3pt]   0& \frac{3}{b_2}&  1&  0&-b_1&  0 &  0 & -1\\[3pt]   0& 0 & 0& 1& -b_2 &  0 &  1 &  0\\[3pt]   0& 0& 0 & \frac{3}{b_2} &  -1 &  1 & 0&0\\[3pt]  0& 0& 0& 0& 1& 1& - \frac{3}{b_2} & 0\\ 0& 0& 0& 1& 0& b_2&-1 &0\\[3pt]  - \frac{3}{b_2}& 0&-1& 0& 0&-b_1 &0& -1 \end{array} \right),
$$
where $b_{1}$ and $b_{2} \neq 0$ are real numbers.   It easy to check that $I$ and $J$ are both Hermitian with respect to the  standard neutral metric $h$. Moreover, the  generalized hypercomplex ${\mathcal I}_{a,b,c}$  is defined by the matrix
{\small $$
 \left ( \begin{array}{cccccccc}  b+ 2c &  -a-b + 2 c& 2 c b_2&  -2c b_1&  0&  0& b b_1&  b b_2\\[3pt]
 a-b + 2 c&  -b - 2 c& -b b_2&  b b_1&  0&  0&  2 c b_1& 2 c b_2\\[3pt]
 \frac{- 6 c}{b_2}&  \frac{3 b}{b_2}&b - 2 c& 0& - b b_1&- 2 c b_1&0& a - b - 2 c\\[3pt]
 0& 0& 0& b - 2 c&-b b_2&-2 c b_2&-a + b + 2 c& 0\\[3pt]
0& 0&  0 & \frac{3 b}{b_2}& - b - 2 c&-a + b - 2 c& \frac{6 c}{b_2}& 0\\[3pt]
0& 0& 0 &  \frac{6 c}{b_2}& a + b - 2 c& b + 2 c& \frac{- 3 b}{b_2}& 0 \\[3pt]
 0&0& 0&a + b +  2 c&- 2 c b_2& b b_2&-b + 2 c&0\\[3pt]
\frac{-3 b}{b_2}& \frac{-6 c}{b_2}& -a - b- 2 c&0&2 c b_1& - b b_1&0&- b + 2 c \end{array} \right )
$$}
and is of type $1$.

So we obtain the following:
\begin{prop}
The Kodaira-Thurston surface admits both a holomorphic symplectic structure and a generalized hypercomplex structure in which the general complex structures in the generalized hypercomplex family are of type $1$.

\end{prop}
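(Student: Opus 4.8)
The plan is to establish the two assertions separately. For the holomorphic symplectic structure I would exhibit it explicitly from the structure equations $de^j=0$ ($j=0,1,2$), $de^3=-e^1\wedge e^2$. Taking the $(1,0)$-forms $\phi^1=e^1+ie^2$ and $\phi^2=e^0+ie^3$, one has $d\phi^1=0$ and $d\phi^2=-i\,e^1\wedge e^2=\tfrac12\,\phi^1\wedge\overline{\phi^1}$, which has no $(0,2)$-component; hence the left-invariant almost complex structure with these $(1,0)$-forms is integrable and gives the usual complex structure on the Kodaira--Thurston surface. The form $\Omega=\phi^1\wedge\phi^2$ is then a nowhere-vanishing $(2,0)$-form with $d\Omega=d\phi^1\wedge\phi^2-\phi^1\wedge d\phi^2=-\tfrac12\,\phi^1\wedge\phi^1\wedge\overline{\phi^1}=0$, so $\Omega$ is a holomorphic symplectic form. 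Being left-invariant with rational structure constants, the complex structure and $\Omega$ descend to the compact quotient.

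For the generalized hypercomplex structure I would pass to the Lie algebra via the correspondence of \cite{ABDF}: left-invariant generalized complex structures on $G=H_3\times\R$ are the same as left-invariant complex structures on the cotangent Lie group $T^*G$ that are orthogonal for the neutral metric $h$ and integrable for the Lie bracket \eqref{bracketcotang}. Working on $\mathfrak g\oplus\mathfrak g^*$ in the basis $(E_j)$ with the displayed structure equations, I would check for the two $8\times8$ matrices $I$ and $J$ above that $I^2=J^2=-\mathrm{Id}$, that $IJ=-JI$, and that both are $h$-orthogonal; these are finite linear-algebra identities. The substantive step, and the main obstacle, is integrability: one must compute the Nijenhuis tensor of each with respect to \eqref{bracketcotang} and verify that it vanishes. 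It is here that the particular entries of $J$ — in particular the coefficients $3/b_2$ and the hypothesis $b_2\neq0$ — are forced, so this step is genuinely a calculation rather than a formality. This yields two anti-commuting generalized complex structures $\mathcal I,\mathcal J$ on the surface, each of type $1$ (equivalently, each canonical pure spinor has a degree-one decomposable factor).

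Since $\mathcal I$ and $\mathcal J$ anti-commute, they satisfy \eqref{1} with $p=0$, so the characterization theorem of Section 4 applies: $\mathcal K=\mathcal I\mathcal J$ is integrable and $\{\mathcal I_{a,b,c}=a\mathcal I+b\mathcal J+c\mathcal K\mid a^2+b^2+c^2=1\}$ is a generalized hypercomplex family. To conclude that every member has type $1$ I would invoke the parity of the type: the canonical line of a generalized complex structure is a pure-spinor line, hence lies entirely among the even or entirely among the odd forms, so the type modulo $2$ is locally constant. At a fixed point the lines $K_{(a,b,c)}$ vary continuously over the connected sphere $S^2$ and therefore all share the parity of $K_{\mathcal I}$, which is odd; on a four-manifold the only odd type is $1$, so $\mathcal I_{a,b,c}$ has type $1$ for every $(a,b,c)$. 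The same conclusion can be read off directly from the explicit matrix for $\mathcal I_{a,b,c}$ displayed above. Finally, left-invariance and rationality of the structure constants guarantee that the whole family descends to the Kodaira--Thurston surface, completing the argument.
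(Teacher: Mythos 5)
Your proposal is correct and, for the core construction, follows the same route as the paper: the same two $8\times 8$ matrices $I$ and $J$ on $\mathfrak g\oplus\mathfrak g^*$, the same checklist ($I^2=J^2=-\mathrm{Id}$, anti-commutation, $h$-orthogonality, integrability for the bracket \eqref{bracketcotang}), and the same appeal to the anti-commuting pair generating the whole integrable $S^2$-family. You defer the Nijenhuis computation, but so does the paper (it only asserts the structures are integrable and Hermitian), so you are not omitting more than the source does. You differ in two useful ways. First, you actually prove the holomorphic symplectic assertion by exhibiting $\phi^1=e^1+ie^2$, $\phi^2=e^0+ie^3$ and checking $d(\phi^1\wedge\phi^2)=0$; the paper treats this as classical and gives no argument, so your version is more self-contained. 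Second, for the claim that \emph{every} member of the family has type $1$, the paper computes the full matrix of $\mathcal I_{a,b,c}$ explicitly and reads off the type, whereas you use the parity of the type: the canonical pure-spinor line is purely even or purely odd, parity is locally constant, $S^2$ is connected, and on a $4$-manifold the only odd type is $1$. That argument is cleaner and more robust (it would survive any perturbation of the matrices preserving anti-commutation and the type of one member), at the cost of not exhibiting the family explicitly; the paper's computation buys the concrete matrix, which is of independent interest. Both are valid.
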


\smallskip
\textbf{Acknowledgements}. 
 Anna Fino is  partially supported by Project PRIN 2022 \lq \lq Geometry and Holomorphic Dynamics”, by GNSAGA (Indam) and  by a grant from the Simons Foundation (\#944448). 
Gueo Grantcharov is partially supported by a grant from the Simons Foundation (\#853269).
\smallskip


\begin{thebibliography}{12}

\bibitem{AD} D.V. Alekseevsky, L. David, A note about invariant SKT structures and generalized K\"ahler structures on flag manifolds, Proc. Edinb. Math. Soc. (2012),  543--549.

\bibitem{AGJ} D. Alvarez, M. Gualtieri, Y. Jiang, Symplectic double groupoids and the generalized Kähler potential, preprint, arXiv:2407.00831
 


\bibitem{ABDF}  L.C. de Andr\'es, M. L. Barberis, I. Dotti, M. Fern\'andez,  Hermitian structures on cotangent bundles of four dimensional solvable Lie groups, Osaka J. Math.  44 (2007),  765--793.


\bibitem{A-C-K}D. Angella, S. Calamai, H. Kasuya, Cohomologies of generalized complex manifolds and nilmanifolds,
J. Geom. Anal. 27 (2017), no. 1, 142–161.


\bibitem{AGG} V. Apostolov, P. Gauduchon, G. Grantcharov, Bihermitian structures on complex surfaces, Proc. London Math. Soc. (3) 79 (1999), 414--428. Corrigendum 92 (2006), no. 1, 200--202

\bibitem{AG} V. Apostolov, M. Gualtieri, Generalized K\"ahler manifolds, commuting complex structures, and split tangent bundles, Comm. Math. Phys. 271 (2007), no. 2, 561--575.


\bibitem{AS}  V.  Apostolov,  J. Streets,  The nondegenerate generalized K\"ahler Calabi-Yau problem, J. Reine Angew. Math. 777 (2021), 1--48.


\bibitem{ASU}  V. Apostolov, J. Streets,   Y. Ustinovskiy, Variational structure and uniqueness of generalized K\"ahler-Ricci solitons, Peking Math. J. 6 (2023), no. 2, 307–351.

\bibitem{BCD}  M. A.  Bailey,  G. R. Cavalcanti,  J. L. van der Leer Dur\'an,  Blow-ups in generalized complex geometry, Trans. Amer. Math. Soc. 371 (2019), no. 3, 2109--2131.


\bibitem{B-G}M. Bailey, M. Gualtieri, Integration of generalized complex structures,
J. Math. Phys. 64 (2023), no. 7, Paper No. 073503, 24 pp.


\bibitem{B-G-2} M. Bailey, M. Gualtieri, Local analytic geometry of generalized complex structures'
Bull. Lond. Math. Soc. 49 (2017), no. 2, 307–319.

\bibitem{Brendhaber}  A. Bredthauer,  Generalized Hyperk\"ahler Geometry and Supersymmetry, arXiv:0608114.

\bibitem{Br} A. Blaga, A. Nannicini,  Canonical connections attached to generalized hypercomplex and biparacomplex structures,
Rev. R. Acad. Cienc. Exactas Fís. Nat. Ser. A Mat. RACSAM 117 (2023), no.4, Paper No. 150, 30 pp.

  \bibitem{BM} M. Boucetta, M. W. Mansouri, Left invariant generalized complex and K\"ahler structures on simply connected four dimensional Lie groups: classification and invariant cohomologies,  J. Algebra 576 (2021), 27--94.

\bibitem{BF}  B. Brienza, A. Fino, Generalized K\"ahler manifolds via mapping tori, arXiv:2305.11075, to appear in J. Symplectic Geom.

\bibitem{BFG}  F. Brienza, A. Fino,  G. Grantcharov, CYT and SKT manifolds with parallel Bismut torsion, arXiv:2401.07800,   to appear in Proc. Roy. Soc. Edinburgh Sect.A.

\bibitem{Cavalcanti} G. R. Cavalcanti, Formality in generalized K\"ahler geometry, Topol. Appl. 154 (2007), 1119--1125.


\bibitem{CG} G. R. Cavalcanti, M. Gualtieri, Blowing up generalized K\"ahler 4-manifolds, Bull. Braz. Math. Soc.
(N.S.) 42 (2011), 537--557.





\bibitem{C-M} G.  R. Cavalcanti, M. Gualtieri, Stable generalized complex structures,
Proc. Lond. Math. Soc. (3) 116 (2018), no. 5, 1075–1111.



\bibitem{C-K-W} G.  R. Cavalcanti, R. Klaasse, A. Witte, Fibrations in semitoric and generalized complex geometry,
Canad. J. Math. 75 (2023), no. 2, 645–685.

\bibitem{C-K-W-2} G.  R. Cavalcanti, R. Klaasse, A. Witte, Self-crossing stable generalized complex structures,
J. Symplectic Geom. 20 (2022), no. 4, 761–811.


\bibitem{C-N} H. Chen, X. Nie, Odd type generalized complex structures on 4-manifolds.
J. Geom. Anal. 31 (2021), no. 1, 457–474.



\bibitem{C-D}  V. Cortes, L. David,
Generalized connections, spinors, and integrability of generalized structures on Courant algebroids,
Mosc. Math. J. 21 (2021), no. 4, 695--736.

 \bibitem{DGMY}  J. Davidov,  G. Grantcharov,  O. Mushkarov,  M.  Yotov,  Generalized pseudo-K\"ahler structures,
Comm. Math. Phys. 304 (2011), no. 1, 49–68.

\bibitem{DM} J. Davidov, O. Mushkarov, Twistorial construction of generalized K\"ahler manifolds, J. Geom. Phys.
57 (2007), 889--901.

\bibitem{Deschamps}  G. Deschamps, Twistor space of a generalized quaternionic manifold,
Proc. Indian Acad. Sci. Math. Sci.131 (2021), no.1, Paper No. 1, 20 pp.

\bibitem{FP} A. Fino, F. Paradiso, Generalized K\"ahler almost abelian Lie groups, Ann. Mat. Pura Appl. (4) 200 (2021), no. 4, 1781--1812.

\bibitem{FP2} A. Fino, F. Paradiso, Hermitian structures on a class of almost nilpotent solvmanifolds, J. Algebra
609 (2022), 861--925.

\bibitem{FT} A. Fino, A. Tomassini, Non-K\"ahler solvmanifolds with generalized K\"ahler structure, J. Symplectic Geom. 7 (2009), 1--14.

\bibitem{Mario-Jeff} M. Garcia-Fernandez, J. Streets,  Generalized Ricci flow, Univ. Lecture Ser., 76
American Mathematical Society, Providence, RI, 2021, vi+248 pp.


\bibitem{G-V-GV} E. Gasparim, F. Valencia, C. Vares, Invariant generalized complex geometry on maximal flag manifolds and their moduli,
J. Geom. Phys. 163 (2021), Paper No. 104108, 21 pp.



\bibitem{GHR} S. Gates, C. Hull, M. Ro\v cek,   Twisted multiplets and new supersymmetric nonlinear $\sigma$-models, Nuclear Phys. B 248 (1984), no. 1, 157--186.

\bibitem{GS} R. Glover, J. Sawon, Generalized twistor spaces for hyperk\"ahler manifolds,
J. Lond. Math. Soc. (2) 91 (2015), no. 2, 321--342.



\bibitem{Goto} R. Goto, Unobstructed deformations of generalized complex structures induced by $\mathcal C^{\infty}$ logarithmic symplectic structures and logarithmic Poisson structures,
Springer Proc. Math. Stat., 154
Springer, [Tokyo], 2016, 159–183.


\bibitem{Goto2}  R. Goto,  Scalar curvature as moment map in generalized K\"ahler geometry, J. Symplectic Geom. 18 (2020), no. 1, 147--190.




\bibitem{G-S}L. Grama, L. Soriani, A remark about mirror symmetry of elliptic curves and generalized complex geometry,
Proyecciones 42 (2023), no. 2, 445–456.


\bibitem{Gualtieri}  M. Gualtieri,  Generalized complex geometry, Ann. of Math. (2) 174 (2011), no. 1, 75--123.


\bibitem{Hitchin} N. Hitchin, Generalized Calabi-Yau manifolds, Quart. J. Math. Oxford Ser. 54 (2003), no. 3, 281--308.


\bibitem{St2} W. Hong, M. Stienon, From Hypercomplex to Holomorphic Symplectic Structures,  J. Geom. Phys. 96 (2015), 187--03.

\bibitem{Hull-Zabzine} C. Hull, M. Zabzine, N=(2,2) superfields and geometry revisited, preprint, arXiv:2404.19079.


\bibitem{Kaledin}   D. Kaledin, Integrability of the twistor space for a hypercomplex manifold, Selecta Math. (N.S.)4(1998), no.2, 271--278.

\bibitem{KSS} T. Kimura,  S. Sasaki,  K.  Shiozawa,
Hyperk\"ahler, bi-hypercomplex, generalized hyperk\"ahler structures and T-duality, Nuclear Phys. B 981 (2022), Paper No. 115873, 19 pp.




\bibitem{Kob} K. Kobayashi, On a B-field transform of generalized complex structures over complex tori
J. Geom. Phys. 206 (2024).



\bibitem{Mun} U.R. Mun,  Some examples of stable generalized complex 6-manifolds with either 0 or negative Euler characteristic,
Differential Geom. Appl. 78 (2021), Paper No. 101799, 11 pp.


\bibitem{Pantilie} R. Pantilie, Generalized quaternionic manifolds, Ann. Mat. Pura Appl. 193  (2014), 633--641.

\bibitem{Pap-W} G. Papadopoulos, E. Witten, Scale and Conformal Invariance in 2d Sigma Models, with an Application to N=4 Supersymmetry, preprint, arXiv:2404.19526.

\bibitem{Pap} G.  Papadopoulos, Scale and Conformal Invariance in Heterotic $\sigma$-Models, preprint, arXiv:2409.01818.


\bibitem{Sil} L. Sillari, Generalized Luttinger surgery and other cut-and-paste constructions in generalized complex geometry,
J. Geom. Phys. 194 (2023), Paper No. 105017, 14 pp.

\bibitem{St1} M. Stienon,  Hypercomplex structures on Courant algebroids, C. R. Math. Acad. Sci. Paris 347 (2009), no.9-10, 545--550.



\bibitem{Va} C. Varea, Invariant generalized complex structures on partial flag manifolds,
Indag. Math. (N.S.) 31 (2020), no. 4, 536–555.



\bibitem{V-SM} C. Varea, L. San Martin, Invariant generalized complex structures on flag manifolds,
J. Geom. Phys. 150 (2020), 103610, 17 pp.


\bibitem{Wang}  Y. Wang, Toric generalized K\"ahler structures. II, J. Symplectic Geom. 21 (2023), no. 2, 235--264.

\bibitem{Witten}  E. Witten, Instantons and the Large $N = 4$ Algebra,  arXiv:2407.20964. 









\end{thebibliography}
\end{document}